\subjclass[2010]{Primary: 37D20; Secondary: 37C70}
\keywords{singular-hyperbolic Attractor, Sink, Three-dimensional flow.}
\thanks{Partially supported by CNPq, FAPERJ and PRONEX/DS from Brazil.}
\newcommand{\cl}{\operatorname{Cl}}
\newcommand{\diff}{\operatorname{\mathfrak{X}^1(M)}}
\newcommand{\per}{\operatorname{Per}}
\newcommand{\sink}{\operatorname{Sink}}
\newcommand{\sou}{\operatorname{Source}}
\newcommand{\sad}{\operatorname{Saddle}}
\newcommand{\di}{\operatorname{div}}
\newcommand{\dis}{\operatorname{Dis}}
\newcommand{\m}{\operatorname{Leb}}
\newcommand{\al}{\alpha}
\newcommand{\SR}{{\mathcal R}}
\newcommand{\SU}{{\mathcal U}}
\newcommand{\card}{\operatorname{card}}
\newcommand{\ang}{\operatorname{angle}}
\newcommand{\Crit}{\operatorname{Crit}}
\newcommand{\Sing}{\operatorname{Sing}}
\newcommand{\psad}{\operatorname{PSaddle}}
\newtheorem{theorem}{Theorem}[section] 
\newtheorem{lemma}[theorem]{Lemma}     
\newtheorem{corollary}[theorem]{Corollary}
\newtheorem*{ara}{Theorem A}
\newtheorem*{arara}{Theorem B}
\title[Attractors, homoclinic tangencies and singular-hyperbolicity]
 {Existence of attractors, homoclinic tangencies and singular-hyperbolicity for flows} 
\author{A. Arbieto, A. Rojas, B. Santiago}
\address{Instituto de Matem\'atica, Universidade Federal do Rio de Janeiro, P. O. Box 68530, 21945-970 Rio
de Janeiro, Brazil.}
\email{arbieto@im.ufrj.br, bruno\_santiago@im.ufrj.br}
\begin{document}
\maketitle

\begin{abstract}
We prove that every $C^1$ generic three-dimensional flow
has either infinitely many sinks, or, finitely many hyperbolic or singular-hyperbolic attractors
whose basins form a full Lebesgue measure set.
We also prove in the orientable case that
the set of accumulation points of the sinks of a $C^1$ generic three-dimensional flow
has no dominated splitting with respect to the linear Poincar\' e flow.
As a corollary we obtain that every three-dimensional flow can be $C^1$ approximated by
flows with homoclinic tangencies or by singular-Axiom A flows.
These results extend \cite{A}, \cite{ah}, \cite{PS} and solve a conjecture in \cite{mp}.
\end{abstract}

\vspace{10pt}

\section{Introduction} 
\label{intro}

\noindent
Araujo's Theorem \cite{A} asserts
that a $C^1$ generic surface diffeomorphism has either infinitely many sinks (i.e. attracting periodic orbits),
or, finitely many hyperbolic attractors
whose basins form a full Lebesgue measure set.
In the recent paper \cite{ams} the authors were able to extend this result
from surface diffeomorphisms to three-dimensional flows without singularities.
More precisely, they proved that a $C^1$ generic three-dimensional flow without singularities
either has infinitely many sinks, or, finitely many hyperbolic attractors whose basins form a full Lebesgue measure set.
The present paper goes beyond and extend
\cite{ams} to the singular case.
Indeed, we prove that every $C^1$ generic three-dimensional flow
has either infinitely many sinks, or, finitely many hyperbolic or singular-hyperbolic attractors
whose basins form a full Lebesgue measure set.
The arguments used in the proof will imply in the orientable case that
the set of accumulation points of the sinks of
a $C^1$ generic three-dimensional flow has no dominated splitting with respect to the linear Poincar\'e
flow. From this we obtain that every three-dimensional flow
can be $C^1$ approximated by
flows with homoclinic tangencies or by singular-Axiom A flows.
This last result extends \cite{ah}, \cite{PS} and solves a conjecture in \cite{mp}.
Let us state our results in a precise way.

By a {\em three-dimensional flow}
we mean a $C^1$ vector fields on compact connected boundaryless manifolds $M$ of dimension $3$.
The corresponding space equipped with the $C^1$ vector field topology will be denoted by $\diff$.
The flow of $X\in\diff$ is denoted by $X_t$, $t\in\mathbb{R}$.
A subset of $\diff$ is {\em residual} if it is a countable intersection of open and dense subsets.
We say that a {\em $C^1$ generic three-dimensional flow satisfies a certain property P} if
there is a residual subset $\mathcal{R}$ of $\diff$ such that P holds for every element of $\mathcal{R}$.
The closure operation is denoted by $\cl(\cdot)$.

By a {\em critical point} of $X$ we mean a point $x$ which is either
{\em periodic} (i.e. there is a minimal $t_{x,X}>0$ satisfying
$X_{t_{x,X}}(x)=x$) or {\em singular} (i.e. $X(x)=0$).
The {\em eigenvalues} of a critical point $x$ are defined respectively as
those of the linear automorphism $DX_{t_{x,X}}(x): T_xM \to T_xM$ not corresponding to
$X(x)$, or, those of $DX(x)$.
A critical point is a {\em sink} if
its eigenvalues are less than $1$ in modulus (periodic case)
or with negative real part (singular case).
A {\em source} will be a sink for the time reversed flow $-X$.
Denote by $\sink(X)$ and $\sou(X)$ the set of sinks and sources of $X$ respectively.

Given a point $x$ we define the {\em omega-limit set},
$$
\omega(x)=\left\{y\in M : y=\lim_{t_k\to\infty}X_{t_k}(x)\mbox{ for some integer sequence }t_k\to\infty\right\}.
$$
(when necessary we shall write $\omega_X(x)$ to indicate the dependence on $X$.)
We call a subset $\Lambda\subset M$
{\em invariant} if $X_t(\Lambda)=\Lambda$ for all $t\in\mathbb{R}$; and
{\em transitive} if there is $x\in\Lambda$ such that
$\Lambda=\omega(x)$.
The {\em basin} of any subset $\Lambda\subset M$ is defined by
$$
W^s(\Lambda)=\{y\in M : \omega(y)\subset \Lambda\}.
$$
(Sometimes we write $W^s_X(\Lambda)$ to indicate dependence on $X$).
An {\em attractor} is a transitive set $A$ exhibiting a neighborhood $U$ such that
$$
A=\displaystyle\bigcap_{t\geq0}X_t(U).
$$
A compact invariant set $\Lambda$ is {\em hyperbolic} if
there are a continuous $DX_t$-invariant tangent bundle decomposition
$T_\Lambda M=E_\Lambda^s\oplus E^X_\Lambda\oplus E_\Lambda^u$ over $\Lambda$
and positive numbers $K,\lambda$ such that $E^X_x$ is generated by $X(x)$,
$$
\|DX_t(x)/E_x^s\|\leq Ke^{-\lambda t}
\quad\mbox{ and } \quad
\|DX_{-t}(x)/E^u_{X_t(x)}\|\leq K^{-1}e^{\lambda t},
\quad\forall (x,t)\in \Lambda\times\mathbb{R}^+.
$$
On the other hand, a {\em dominated splitting} $E\oplus F$ for $X$ over an invariant set $I$
is a continuous tangent bundle $DX_t$-invariant splitting $T_I M=E_I\oplus F_I$
for which there are positive constants $K,\lambda$ satisfying
$$
\|DX_t(x)/E_x\|\cdot\|DX_{-t}(X_t(x))/F_{X_t(x)}\|\leq Ke^{-\lambda t},
\quad\quad\forall (x,t)\in I\times\mathbb{R}^+.
$$
In this case we say that the dominating subbundle $E_I$ is {\em contracting} if
$$
\|DX_t(x)/E_x\|\leq Ke^{-\lambda t},
\quad\quad\forall (x,t)\in I\times\mathbb{R}^+
$$
The central subbundle $F_I$ is said to be {\em volume expanding} if
$$
|\det DX_t(x)/F_x|^{-1}\leq Ke^{-\lambda t},
\quad\quad\forall (x,t)\in I\times\mathbb{R}^+.
$$
A compact invariat set is {\em partially hyperbolic} if it has a dominated splitting with contracting
dominating direction. We say that a partially hyperbolic set is {\em singular-hyperbolic for $X$}
if its singularities are all hyperbolic and its central subbundle is volume expanding.
A {\em hyperbolic} (resp. {\em singular-hyperbolic}) {\em attractor} for $X$ is an attractor which is simultaneously a hyperbolic (resp. singular-hyperbolic) set for $X$.

With these definitions we can state our first result.

\begin{ara}
A $C^1$ generic three-dimensional flow has either infinitely many sinks, or,
finitely many hyperbolic or singular-hyperbolic attractors whose basins form a full Lebesgue measure set.
\end{ara}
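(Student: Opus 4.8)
The plan is to follow the strategy of Ara\'ujo \cite{A} for surface diffeomorphisms, as extended to non-singular three-dimensional flows in \cite{ams}, replacing the derivative by the linear Poincar\'e flow and uniform hyperbolicity by singular-hyperbolicity. First I would work inside a residual subset $\mathcal{R}\subset\diff$ on which, simultaneously: $X$ is Kupka--Smale; the standard consequences of Pugh's closing lemma and the Bonatti--Crovisier connecting lemma hold, namely $\mathcal{R}(X)=\Omega(X)=\cl(\Crit(X))$, each chain recurrence class is a Hausdorff limit of periodic orbits, and $X\mapsto\Omega(X)$ (and the quasi-attractors) vary semicontinuously; the linear-Poincar\'e-flow dichotomy established earlier in the paper holds, namely that a quasi-attractor (a Lyapunov stable chain recurrence class) carrying no dominated splitting for the linear Poincar\'e flow is accumulated by sinks --- the ``source'' branch of the usual Pujals--Sambarino/Ma\~n\'e alternative being excluded here by Lyapunov stability; and $\Leb$-almost every point of $M$ has $\omega$-limit equal to a quasi-attractor. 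For $X\in\mathcal{R}$ the dichotomy of Theorem A is now triggered by whether $\sink(X)$ is infinite; if it is we are in the first alternative and done, so assume $\sink(X)=\{\si_1,\dots,\si_k\}$ is finite.

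Under this assumption the core step is to show that every quasi-attractor $\Gamma\notin\{\si_1,\dots,\si_k\}$ is a hyperbolic or singular-hyperbolic attractor. Since $\sink(X)$ is finite, the compact set $\si_1\cup\cdots\cup\si_k$ has no accumulation points outside itself, so $\Gamma$ is not accumulated by sinks; hence by the generic dichotomy above $\Gamma$ admits a dominated splitting $\mathcal{N}^{cs}\oplus\mathcal{N}^{cu}$ for the linear Poincar\'e flow, with $\dim\mathcal{N}^{cs}=\dim\mathcal{N}^{cu}=1$ because $\dim M=3$. I would then promote this to singular-hyperbolicity via the ergodic closing lemma: were $\mathcal{N}^{cs}$ not uniformly contracting, there would be a periodic orbit near $\Gamma$ with a weak stable exponent, perturbable to a sink, contradicting that $\Gamma$ is not accumulated by sinks; and were the central bundle $\langle X\rangle\oplus\mathcal{N}^{cu}$ not volume expanding, there would be a periodic orbit near $\Gamma$ with a weak normal unstable exponent, which --- the strong stable direction already being uniformly contracting --- is again perturbable to a sink, the same contradiction. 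The singularities inside $\Gamma$ are hyperbolic by Kupka--Smale and Lorenz-like by domination, so $\Gamma$ is partially hyperbolic with volume-expanding center, that is singular-hyperbolic (plainly hyperbolic when $\Gamma\cap\Sing(X)=\es$). Being Lyapunov stable and (singular-)hyperbolic, $\Gamma$ is an attractor by the Morales--Pac\'ifico--Pujals-type structure theory.

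Finiteness comes next. If there were infinitely many quasi-attractors $\Gamma_n\notin\{\si_i\}$, they would Hausdorff-accumulate on a compact invariant $Y\subset\mathcal{R}(X)$; since none of the chain recurrence classes involved is accumulated by sinks, the dominated splittings glue to one for the linear Poincar\'e flow over a fixed neighborhood of $\mathcal{R}(X)$ minus small basins of the $\si_i$, with uniform constants, and the sink-based arguments above then give uniform contraction and volume-expansion rates there; consequently each $\Gamma_n$ possesses a trapping region of a definite, $n$-independent size, so only finitely many can fit in $M$, a contradiction. (Alternatively, semicontinuity of $\Omega(X)$ shows the number of quasi-attractors is generically locally constant.) Hence there are finitely many attractors $A_1,\dots,A_m$: the $\si_i$, each a hyperbolic attractor as a single critical orbit, together with the finitely many non-trivial (singular-)hyperbolic ones from the previous paragraph.

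For the full-measure statement, recall that $\Leb$-a.e. $x$ has $\omega(x)$ a quasi-attractor; but the quasi-attractors are precisely $\si_1,\dots,\si_k$ together with the non-trivial ones just classified, so $\Leb$-a.e. $x$ satisfies $\omega(x)=A_j$ for some $j$, i.e. $x\in W^s(A_j)$, whence $\bigcup_{j=1}^m W^s(A_j)$ has full Lebesgue measure. Equivalently, as in \cite{A} and \cite{ams}, one checks that $M\setminus\bigcup_j W^s(A_j)$ lies in the union of the stable sets of the finitely many saddle-type (singular-)hyperbolic pieces and normally hyperbolic tori of the spectral-type decomposition of the domination region, and in dimension three each such stable set has topological dimension at most two, hence is $\Leb$-null. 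This gives the second alternative of Theorem A. I expect the genuine obstacle to be the core step of the second paragraph: producing the dominated splitting for the linear Poincar\'e flow on each quasi-attractor and upgrading it to singular-hyperbolicity, since there the linear-Poincar\'e-flow perturbation machinery must be reconciled with the control of the dynamics near singularities --- where that flow is not even defined and where naive perturbations can destroy Lyapunov stability --- and with the exclusion of the ``source'' branch of the dichotomy.
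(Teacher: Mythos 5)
Your overall architecture (finitely many sinks $\Rightarrow$ LPF-domination on the relevant limit sets $\Rightarrow$ upgrade to singular-hyperbolicity in the spirit of \cite{mpp} $\Rightarrow$ finiteness and full measure of basins) is the same general strategy as the paper, but two load-bearing steps of your argument are assumed rather than proved, and one of them is false as stated. First, you take as a ``standard generic property'' that Lebesgue-almost every point of $M$ has $\omega$-limit equal to a quasi-attractor. No such $C^1$-generic fact is available: the Bonatti--Crovisier type results give this only for a \emph{residual} (Baire-generic) set of points of $M$, not for Lebesgue-a.e.\ point, and obtaining a Lebesgue-typical statement is precisely the heart of Araujo-type theorems. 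The paper has to substitute a much weaker statement, Theorem~\ref{move-attractor}: generically the \emph{weak} basin $W^s_w(\dis(X))$ of the closure of the dissipative critical points has full measure, i.e.\ a.e.\ $\omega$-limit merely \emph{meets} $\dis(X)$. Converting ``meets'' into ``is contained in an attractor'' then costs real work: the Kupka--Smale decomposition of $\dis(X)$, Hayashi's connecting lemma to push dissipative singular saddles with $W^s_w\neq W^s$ into $\cl(\psad_d(X))$, and the Lyapunov-stability structure $H_i=\Lambda^+\cap\Lambda^-$ from \cite{cmp} to get $W^s_w(H_i)=W^s(H_i)$. None of this is replaced by anything in your sketch.

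Second, your final measure count discards the non-attracting pieces by claiming their stable sets have topological dimension at most two, ``hence are $\Leb$-null.'' That inference is invalid: topological dimension $\le 2$ does not imply zero $3$-dimensional Lebesgue measure (a fat Cantor set times a surface already defeats it), and in the $C^1$ category the stable set of a nontrivial saddle-type basic set is not known to be null -- indeed $C^1$ hyperbolic basic sets can have positive Lebesgue measure, which is exactly why Araujo's theorem and \cite{ams} need a genericity argument at this point. The paper handles it with Lemma~\ref{local} (for a generic flow and residual perturbations, a hyperbolic homoclinic class is an attractor if and only if its basin has positive measure, via Kuratowski semicontinuity and the persistence result of \cite{a1}) together with Lemma~\ref{AO} (generically, singular-hyperbolic attractors with singularities have zero measure), assembled in Theorem~\ref{fui}. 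Without these generic lemmas your step ``remove the classes with negligible basins'' is unjustified. A smaller but related issue: your sink-creation dichotomy is formulated for arbitrary quasi-attractors, whereas the available $C^1$ machinery (Proposition 5.3 of \cite{ams}, based on \cite{w0}) gives LPF-domination on the closure of the \emph{dissipative periodic saddles}; chain classes need not contain periodic orbits, and near singularities the LPF is undefined -- this is the part you yourself flag as the obstacle, and it is exactly what Theorem~\ref{peo} (the analysis of Lorenz-like singularities, the angle estimates A1--A2, and the extension of the splitting over $\cl(W^u(\sigma))$) is there to supply.
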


The method of the proof of the above result (based on \cite{mpp})
will imply the following result
for three-dimensional flows
on orientable manifolds.
Denote by $\Sing(X)$ the set of singularities of $X$.
Given $\Lambda\subset M$ we denote $\Lambda^*=\Lambda\setminus \Sing(X)$.

We define the vector bundle $N^X$ over $M^*$ whose fiber at $x\in M^*$ is the
the orthogonal complement of $X(x)$ in $T_xM$.
Denoting the projection $\pi_x: T_xM\to N_x^X$ we define the {\em Linear Poincar\'e flow} (LPF)
$P^X_t: N^X\to N^X$ by
$P^X_t(x)=\pi_{X_t(x)}\circ DX_t(x)$, $t\in\mathbb{R}$.
An invariant set $\Lambda$ of $X$
{\em has a LPF-dominated splitting} if $\Lambda^*\neq\emptyset$ and
there exist a continuous tangent bundle decomposition
$N_{\Lambda^*}^X=N^{s,X}_{\Lambda^*}\oplus N^{u,X}_{\Lambda^*}$
with $dim N^{s,X}_x=dim N^{u,X}_x=1$ ($\forall x\in\Lambda^*$) and $T>0$ such that 
$$
\left\|P^X_{T}(x)/N^{s,X}_x\right\|\left\|P^X_{-T}(X_T(x))/N^{u,X}_{X_T(x)}\right\|\leq\frac{1}{2},
\quad\quad\forall x\in\Lambda^*.
$$

\begin{arara}
If $X$ is a $C^1$ generic three-dimensional flow of a orientable manifold, then
neither $\cl(\sink(X))\setminus \sink(X)$ nor $\cl(\sou(X))\setminus \sou(X)$ have LPF-dominated splitting.
\end{arara}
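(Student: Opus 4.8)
The plan is to argue by contradiction and dichotomy, exploiting Theorem A together with the $C^1$-generic machinery (Pugh's closing lemma, Hayashi's connecting lemma, Franks' lemma, and Mañé-type ergodic-closing arguments) that ensures sinks accumulate on the ``right'' kind of set. Suppose $X$ lies in the residual set for which Theorem A holds, and suppose for contradiction that $\Lambda := \cl(\sink(X))\setminus\sink(X)$ carries a LPF-dominated splitting $N^{s}\oplus N^{u}$ with $\dim N^s=\dim N^u=1$. The first step is to observe that, by genericity, $\Lambda$ is a compact invariant set containing no singularities in its ``core'' in a controllable way, or more precisely that the LPF-domination extends to a neighborhood and hence to a slightly larger invariant set $\Lambda'\supset\Lambda$; and that since $X\notin$ ``infinitely many sinks'' is the case we must rule out, we are in fact in the situation where $X$ has finitely many hyperbolic or singular-hyperbolic attractors with full-measure basins. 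The key tension is this: sinks are themselves periodic attractors, so if there were infinitely many of them Theorem A would already put us in its first alternative and there would be nothing to prove about accumulation points (the statement is only interesting when $\sink(X)$ is infinite, in which case $\cl(\sink(X))\setminus\sink(X)\neq\emptyset$); so we genuinely are in the infinitely-many-sinks world and must derive a contradiction from the existence of LPF-domination on the accumulation set.

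Next I would invoke the orientability hypothesis to pass from LPF-domination to genuine hyperbolicity of the periodic sinks near $\Lambda$ in a quantitative, uniform way. The point of orientability is that on an orientable $3$-manifold the normal bundle $N^X$ is orientable along periodic orbits, so the eigenvalues of the return map of the linear Poincaré flow are real (no complex/rotational behavior is forced), and a LPF-dominated splitting into two one-dimensional bundles gives, via a Franks-lemma perturbation, uniform contraction rates: every periodic sink sufficiently close to $\Lambda$ would have both normal eigenvalues bounded away from $1$ by a uniform constant, with a uniformly dominated splitting of its orbit. This uniform hyperbolicity of the family of sinks, together with compactness of $\Lambda$, is exactly the mechanism (as in \cite{mpp} and \cite{ams}) by which one shows the set of sinks cannot accumulate: a uniformly hyperbolic family of periodic points with uniformly dominated splitting whose closure carries a dominated splitting must be finite, because otherwise one builds, via the connecting lemma, a non-trivial homoclinic class inside $\Lambda$ which is then uniformly hyperbolic of saddle type, contradicting the fact that it is an accumulation set of \emph{sinks} (index mismatch) — or one directly contradicts the local product structure. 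This is the step I expect to be the main obstacle: making the passage ``LPF-domination on $\Lambda$ $\Rightarrow$ uniform hyperbolic estimates on nearby sinks $\Rightarrow$ finiteness'' fully rigorous requires carefully combining the linear Poincaré flow estimates with reparametrization/time-rescaling near the (possibly present) singularities of $X$ accumulated by $\Lambda$, since the LPF blows up near $\Sing(X)$ and one must show the domination on $\Lambda^*$ still controls the dynamics of the sinks.

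Finally, once finiteness of $\sink(X)$ is forced, we contradict the standing assumption that we are in the ``infinitely many sinks'' branch — equivalently, we conclude directly that $\cl(\sink(X))\setminus\sink(X)=\emptyset$, which is the assertion of Theorem B (the empty set vacuously fails to have a LPF-dominated splitting by the requirement $\Lambda^*\neq\emptyset$, but the real content is that there is no dominated splitting available because the set is too small or too ``rotational''). The argument for $\cl(\sou(X))\setminus\sou(X)$ is identical after replacing $X$ by $-X$, noting that the LPF of $-X$ is the adjoint/inverse of $P^X_t$ and that sinks of $-X$ are sources of $X$, and that orientability is preserved; so no separate work is needed. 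Throughout, I would cite Theorem A for the structural dichotomy, the Kupka–Smale and connecting-lemma genericity to ensure the relevant periodic orbits and homoclinic connections exist within the residual set, and \cite{mpp,ams} for the template of the ``no domination on the accumulation set of sinks'' argument that this theorem extends to the singular setting.
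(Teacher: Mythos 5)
Your proposal has the right skeleton (argue by contradiction, note that LPF-domination on $\cl(\sink(X))\setminus\sink(X)$ forces infinitely many sinks, and aim at a finiteness contradiction), but the core of the theorem is precisely the step you flag as ``the main obstacle'' and then leave unproved: handling the singularities of $X$ that lie in the accumulation set of sinks. The paper's proof does the real work exactly there: it shows that generically each such $\sigma$ must be Lorenz-like (for $X$ if $Ind(\sigma)=2$, for $-X$ if $Ind(\sigma)=1$), with $(\cl(\sink(X))\setminus\sink(X))\cap W^{ss,X}(\sigma)=\{\sigma\}$, derives uniform angle and eigenvalue-ratio estimates for nearby sinks near $\sigma$ via the cross-section arguments of \cite{mpp} (as in the proof of Theorem \ref{peo}), extends the resulting splitting to a dominated splitting over $\cl(W^u(\sigma))$ (resp.\ $\cl(W^s(\sigma))$), and concludes that this set is a singular-hyperbolic attractor for $X$ (resp.\ $-X$), which cannot be accumulated by sinks; hence $\Sing(X)\cap(\cl(\sink(X))\setminus\sink(X))=\emptyset$. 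Without this analysis your argument never gets off the ground, since the LPF and its domination give no control at the singularities.

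Two further points are off target. First, once singularities are excluded, the paper does not need Franks' lemma, the connecting lemma, or any homoclinic-class/index-mismatch construction: it invokes the generic fact (built from Lemma 3.1 of \cite{bgy} plus a minimally non-hyperbolic set argument) that a nonsingular compact invariant set with an LPF-dominated splitting is hyperbolic, and then the elementary fact that a neighborhood of such a hyperbolic set meets only finitely many orbits of sinks, contradicting $\card(\sink(X))=\infty$. Your alternative mechanism is vague and, as stated, dubious (the nearby periodic orbits are sinks, not uniformly hyperbolic saddles, and no homoclinic class inside $\Lambda$ is produced by the hypotheses). Second, your uses of orientability and of Theorem A are misplaced: orientability does not force real eigenvalues of sinks (the paper instead shows generically that only finitely many sinks have nonreal eigenvalues and that this number is locally constant), and Theorem A plays no role in the paper's proof of Theorem B, nor does it help, since in the infinitely-many-sinks alternative it yields no information about $\cl(\sink(X))\setminus\sink(X)$. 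The case of $\cl(\sou(X))\setminus\sou(X)$ by applying the argument to $-X$ is, as you say, immediate.
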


As an application we obtain a solution for Conjecture 1.3 in \cite{mp}.
A periodic point $x$ of $X$ is a {\em saddle} if it has eigenvalues of modulus less and bigger than $1$ simultaneously.
Denote by $\psad(X)$ the set of periodic saddles of $X$.
As is well known \cite{hps}, through any $x\in\psad(X)$ it passes a pair of invariant manifolds,
the so-called strong stable and unstable manifolds $W^{ss}(x)$ and $W^{uu}(x)$,
tangent at $x$ to the eigenspaces corresponding to the eigenvalue of modulus less and bigger than $1$ respectively.
Saturating these manifolds with the flow we obtain the stable and unstable manifolds $W^s(x)$ and $W^u(x)$ respectively.
A {\em homoclinic point} associated to $x$ is a point $q$ where these last manifolds meet.
We say that $q$ is a {\em transverse homoclinic point} if $T_qW^s(x)\cap T_qW^u(x)$ is the one-dimensional subspace generated
by $X(q)$ and a {\em homoclinic tangency} otherwise.

We define the {\em nonwandering set} $\Omega(X)$ as the set of points $p$ such that
for every $T>0$ and every neighborhood $U$ of $p$ there is $t>T$ satisfying
$X_t(U)\cap U\neq\emptyset$.

Following \cite{mp}, we say that $X$ is {\em singular-Axiom A} if
there is a finite disjoint union
$$
\Omega(X)=\Lambda_1\cup\cdots\cup\Lambda_r,
$$
where each $\Lambda_i$ for $1\leq i\leq r$
is a transitive hyperbolic set (if $\Lambda_i\cap \Sing(X)=\emptyset$)
or a singular-hyperbolic attractor for either $X$ or $-X$ (otherwise).

With these definitions we can state the following corollary.

\begin{corollary}
Every three-dimensional flow can be $C^1$ approximated by a flow exhibiting a homoclinic tangency or by a singular-Axiom A flow.
\end{corollary}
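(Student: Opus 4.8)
The plan is to derive the Corollary from Theorems~A and~B by the classical ``Palis dichotomy'' scheme. Write $\mathrm{HT}\subset\diff$ for the set of flows that can be $C^1$ approximated by flows exhibiting a homoclinic tangency; this set is closed by definition, so if $X\in\mathrm{HT}$ there is nothing to prove. Hence I may assume that $X$ lies in the open set $\SW:=\diff\setminus\mathrm{HT}$ of flows that are \emph{$C^1$ far from homoclinic tangencies}, and it suffices to produce singular-Axiom~A flows arbitrarily $C^1$ close to $X$. A routine passage to the orientation double cover reduces everything to orientable $M$: an equivariant perturbation argument (symmetrizing by the deck involution any nearby flow with a tangency) shows the lift $\widetilde X$ is again far from tangencies, and since a finite covering is a semiconjugacy carrying the relevant hyperbolic and singular-hyperbolic structures, a singular-Axiom~A approximation upstairs pushes down to one of $X$. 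So from now on $M$ is orientable.

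Fix a residual set $\SR\subset\diff$, symmetric under $X\mapsto -X$, on which: the conclusions of Theorems~A and~B hold for the flow and for its reverse; Pugh's general density theorem gives $\Omega(\cdot)=\cl(\per(\cdot)\cup\Sing(\cdot))$; the chain recurrence classes of critical elements coincide with homoclinic/singular classes and depend semicontinuously on the flow; and the standard consequence of being far from tangencies holds, namely that such a flow carries a Linear Poincar\'e flow dominated splitting over every chain class and, more generally, over every nonempty compact invariant subset of $\cl(\sink(\cdot))$ and of $\cl(\sou(\cdot))$ (the flow version of Wen's tangency-versus-domination dichotomy). Pick $Y\in\SR\cap\SW$ that is $C^1$ close to $X$; note that $-Y\in\SR\cap\SW$ as well.

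\textbf{Finiteness of sinks and sources, and the spectral pieces.} I first claim $Y$ has only finitely many sinks. If not, $K:=\cl(\sink(Y))\setminus\sink(Y)$ is a nonempty compact invariant set, and since $Y$ is far from tangencies $K$ carries a Linear Poincar\'e flow dominated splitting; this contradicts Theorem~B. Thus $\sink(Y)$ is finite and, applying this to $-Y$, so is $\sou(Y)=\sink(-Y)$. Consequently $Y$ realizes the second alternative of Theorem~A: there are finitely many hyperbolic or singular-hyperbolic attractors $A_1,\dots,A_k$ of $Y$ with $\Leb\big(W^s(A_1\cup\cdots\cup A_k)\big)$ full, and likewise $-Y$ has finitely many attractors $R_1,\dots,R_\ell$ (the repellers of $Y$) with $\Leb\big(W^u(R_1\cup\cdots\cup R_\ell)\big)$ full. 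It remains to see that $\Omega(Y)$ is a finite disjoint union of transitive hyperbolic sets and singular-hyperbolic attractors of $Y$ or $-Y$, i.e.\ that $Y$ is singular-Axiom~A in the sense of \cite{mp}. Since $Y\in\SR$, $\Omega(Y)=\cl(\per(Y)\cup\Sing(Y))$ decomposes into chain classes, each carrying a Linear Poincar\'e flow dominated splitting $N^s\oplus N^u$ with one-dimensional summands; the $C^1$-generic singular-hyperbolic theory for three-dimensional flows (the flow counterpart of the Pujals--Sambarino program, cf.\ \cite{mpp}) forces each such class to be a transitive hyperbolic set, or singular-hyperbolic for $Y$, or singular-hyperbolic for $-Y$, and a chain class of the latter two types must be an attractor of $Y$ (resp.\ of $-Y$), hence one of the $A_i$ (resp.\ of the $R_j$). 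Finally, $W^s(\bigcup_i A_i)$ and $W^u(\bigcup_j R_j)$ are open and of full Lebesgue measure, so the union of the remaining saddle-type hyperbolic classes lies in the closed Lebesgue-null set $M\setminus\big(W^s(\textstyle\bigcup_i A_i)\cup W^u(\bigcup_j R_j)\big)$; local uniform hyperbolicity makes these classes isolated, hence finite in number. This yields the required finite decomposition of $\Omega(Y)$, so $Y$ is singular-Axiom~A, and the approximation of $X$ is complete.

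\textbf{Main difficulty.} The clean part is deducing finiteness of sinks and sources directly from Theorem~B together with the far-from-tangencies hypothesis. The delicate step is the last one: upgrading the \emph{per-class} Linear Poincar\'e flow dominated splitting supplied by ``far from tangencies'' to a genuine hyperbolic-or-singular-hyperbolic structure on all of $\Omega(Y)$ and, crucially, to the \emph{finiteness} of the saddle-type pieces. This is exactly where the bulk of the three-dimensional singular-hyperbolic machinery is needed, and where one must carefully combine the full-measure basin statements of Theorem~A applied to both $Y$ and $-Y$ with the generic local structure of chain recurrence classes.
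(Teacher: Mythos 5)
Your first half is essentially the paper's argument and is fine in substance: far from tangencies plus Theorem B forces $\cl(\sink(X))\setminus\sink(X)=\cl(\sou(X))\setminus\sou(X)=\emptyset$, hence finitely many sinks and sources. One caveat: your justification of the needed domination, namely that being far from tangencies yields a LPF-dominated splitting over \emph{every} nonempty compact invariant subset of $\cl(\sink(\cdot))$, is an over-claim (it already fails on the orbit of a sink with nonreal eigenvalues). The paper gets the domination on the accumulation sets differently: by \cite{ah} the set $\cl(\psad(X))$ has a LPF-dominated splitting for $C^1$ generic flows far from tangencies, and generically $(\cl(\sink(X))\setminus\sink(X))\cup(\cl(\sou(X))\setminus\sou(X))\subset\cl(\psad(X))$; this is the correct route to what you actually use, so this part is fixable.

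The genuine gap is your second half. After finiteness of sinks and sources the paper does \emph{not} reconstruct the singular-Axiom~A structure by hand (and in fact never uses Theorem A in this proof): it invokes a specific external theorem (Theorem A in \cite{mpp}) which, for such generic flows, yields directly that $X$ is singular-Axiom~A. Your substitute argument does not establish this. First, the assertion that every chain class of $\Omega(Y)$ is either a transitive hyperbolic set or singular-hyperbolic for $Y$ or $-Y$, and that the singular ones are attractors or repellers, is precisely the deep content that must be proved or cited precisely; ``the flow counterpart of the Pujals--Sambarino program, cf.\ \cite{mpp}'' is not such a citation (the theorems of \cite{mpp} concern robustly transitive sets, not arbitrary generic chain classes), and a singular-hyperbolic set, even a chain class, is not automatically an attractor. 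Inside the present paper the analogous work is exactly the long proof of Theorem \ref{peo}, and even there it is carried out only for $\cl(\psad_d(X))$, not for all of $\Omega(Y)$. Second, your finiteness argument for the saddle-type pieces --- ``local uniform hyperbolicity makes these classes isolated, hence finite in number'' --- is wrong: a hyperbolic homoclinic class need not be isolated or locally maximal, and lying in a closed Lebesgue-null set gives no bound on the number of classes; the paper's corresponding finiteness claim is proved by a different mechanism (an accumulation of infinitely many classes either produces a singularity, which is absorbed into the attractor $\cl(W^u(\sigma))$, or produces a compact hyperbolic set containing infinitely many distinct classes, which is impossible). Third, the full-measure basin information from Theorem A, applied to $Y$ and $-Y$, does not by itself control $\Omega(Y)$, so it cannot replace the missing structural theorem. (Your double-cover reduction is also shakier than stated --- a perturbation of the lift need not descend, and an approximating singular-Axiom~A flow upstairs need not be equivariant --- but the paper is equally brief on this point, so it is not the main issue.) As written, the step ``$Y$ is singular-Axiom~A'' is not established.
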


\begin{proof}
Passing to a finite covering if necessary we can assume that $M$ is orientable.
Let $R(M)$ denote the set of three-dimensional flows which cannot be $C^1$ approximated by
ones with homoclinic tangencies.
As is well-known \cite{ah}, $\cl(\psad(X))$ has a LPF-dominated splitting for every $C^1$ generic $X\in R(M)$.
Furthermore,
$$
(\cl(\sink(X))\setminus \sink(X))\cup(\cl(\sou(X))\setminus\sou(X))\subset\cl(\psad(X))
$$
Combining this inclusion with Theorem B we obtain
$\cl(\sink(X))\setminus \sink(X)=\cl(\sou(X))\setminus \sou(X)=\emptyset$,
and so, $\sink(X)\cup\sou(X)$ consists of finitely many orbits, for every $C^1$-generic $X\in R(M)$.
Now we obtain that $X$ is singular-Axiom A by Theorem A in \cite{mpp}.
\end{proof}

\section{Proof of theorems A and B}

\noindent
Let $X$ be a three-dimensional flow.
Denote by $\Crit(X)$ the set of critical points.

Recall that a periodic point saddle if it has eigenvalues of modulus less and bigger than $1$ simultaneously.
Analogously for singularities by just replace $1$ by $0$ and the eigenvalues by their corresponding real parts.
Denote by $\sink(X)$ and $\sad(X)$ the set of sinks and saddles of $X$ respectively.

A critical point $x$ is {\em dissipative} if
the product of its eigenvalues (in the periodic case) or the divergence $\di X(x)$ (in the singular case)
is less than $1$ (resp. $0$).
Denote by $\Crit_d(X)$ the set of dissipative critical points. We define
the {\em dissipative region} by $\dis(X)=\cl(\Crit_d(X))$.

For every subset $\Lambda\subset M$ we define
the {\em weak basin} by
$$
W^s_w(\Lambda)=\{x\in M:\omega(x)\cap \Lambda\neq\emptyset\}.
$$
(This is often called
{\em weak region of attraction} \cite{bs}.)
With these notations we obtain the following result. Its proof is similar to the corresponding one in \cite{ams}:

\begin{theorem}
\label{move-attractor}
There is a residual subset $\mathcal{R}_6$ of three-dimensional flows
$X$ for which $W^s_w(\dis(X))$ has full Lebesgue measure.
\end{theorem}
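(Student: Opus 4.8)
The plan is to follow the strategy of the analogous statement in \cite{ams}, adapting it to the singular setting where the obstruction will be handled by the hypotheses already built into the definition of the dissipative region. First I would recall the standard $C^1$-generic dichotomy coming from Pugh's closing lemma and Hayashi's connecting lemma: for a residual set of flows, every point $x$ whose omega-limit set $\omega(x)$ contains a nonperiodic point that is not a singularity is, generically, approximated by critical points; more precisely one uses the Bonatti--Crovisier type ergodic closing lemma for flows (or the general position results already invoked in \cite{ams}) to show that for $C^1$-generic $X$, Lebesgue-almost every point $x$ has $\omega(x)$ meeting $\cl(\Crit(X))$. The point of the argument is then to promote "meets $\cl(\Crit(X))$'' to "meets $\dis(X)=\cl(\Crit_d(X))$'', i.e. to the dissipative critical points.

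The key mechanism for that promotion is a volume / Jacobian argument. Consider the (generic) set of $x$ with $\omega(x)\cap\Crit(X)\neq\emptyset$ but $\omega(x)\cap\dis(X)=\emptyset$; I would show this set is Lebesgue-null. The heuristic is: if the forward orbit of a positive-measure set of points spends its time accumulating only on non-dissipative critical points (product of eigenvalues $\geq 1$ in the periodic case, $\di X\geq 0$ in the singular case), then the flow is, along these orbits, non-contracting in volume in a neighborhood of the limit set; by a Poincaré-recurrence / change-of-variables estimate (of the type $\int |\det DX_t|\, d\Leb$ cannot stay bounded above and below simultaneously on a recurrent positive-measure set) one derives a contradiction with the invariance of Lebesgue measure class, unless the set has zero measure. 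This is exactly where one must be careful about singularities: near a singularity $\sigma$ with $\di X(\sigma)\geq 0$ the flow is volume non-contracting, so the same estimate applies; conversely, if $\omega(x)$ is contained in a union of singularities and regular orbits all of which are dissipative, then $x\in W^s_w(\dis(X))$ by definition. So the dichotomy closes: for $C^1$-generic $X$, for Lebesgue-a.e.\ $x$ either $\omega(x)\cap\Crit(X)=\emptyset$ — which generically (again by the connecting/closing lemmas, as in \cite{ams}) happens only on a null set because recurrent points are generically approximated by critical points — or $\omega(x)\cap\dis(X)\neq\emptyset$, i.e.\ $x\in W^s_w(\dis(X))$.

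Concretely the steps are: (1) intersect over a countable basis to build $\mathcal R_6$ as a residual set on which the relevant generic properties (closing lemma consequences, continuity of $\Crit_d$ under perturbation, Kupka--Smale, the "a.e.\ recurrence'' statement from \cite{ams}) all hold; (2) fix $X\in\mathcal R_6$ and let $Z=\{x : \omega(x)\cap\dis(X)=\emptyset\}$; (3) show $\Leb(Z\cap\{x:\omega(x)\cap\Crit(X)\neq\emptyset\})=0$ via the volume argument above; (4) show $\Leb(\{x:\omega(x)\cap\Crit(X)=\emptyset\})=0$ by the genericity of recurrence-implies-criticality; (5) conclude $\Leb(Z)=0$, hence $\Leb(W^s_w(\dis(X)))=1$.

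The main obstacle I expect is step (3): making the volume/Jacobian estimate rigorous in the presence of singularities, since near a singularity the time-reparametrization of the flow and the possible unboundedness of return times force one to work with the linear Poincaré flow and with carefully chosen cross-sections rather than with $DX_t$ directly. The excerpt explicitly says the proof is "similar to the corresponding one in \cite{ams}'', so I would lean heavily on transferring their cross-section construction, checking that the singular part contributes nothing new because non-dissipative singularities are themselves volume non-contracting and dissipative ones are already in $\dis(X)$; the genericity inputs (closing lemma, Kupka--Smale, and the a.e.\ statement that omega-limits meet the critical set) are by now standard and can be cited wholesale.
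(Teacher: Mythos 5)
The paper itself gives no written proof of Theorem \ref{move-attractor}: it is imported wholesale from \cite{ams}. Measured against the argument there, your outline reproduces the coarse shape (a Jacobian/volume estimate plus $C^1$-generic closing-type results), but the mechanism you propose for the crucial step (3) does not work. Knowing that every critical point met by $\omega(x)$ (or accumulating on it) is non-dissipative gives you no control on the volume behaviour along the orbit of $x$, nor ``in a neighbourhood of the limit set'': in general $\omega(x)$ also contains non-critical points where nothing is prescribed, and even at the critical elements only period-averaged quantities (the eigenvalue product, or $\di X(\sigma)$ at a singularity) are constrained, not the divergence on a neighbourhood. Moreover the points of your set $Z\cap\{x:\omega(x)\cap\Crit(X)\neq\emptyset\}$ need not be recurrent, so Poincar\'e recurrence cannot be applied to them. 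Hence the contradiction you want with the invariance of the Lebesgue measure class never materializes; this is a genuine gap in the heart of the argument, not merely the technical issue about cross-sections near singularities that you flag.

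The actual argument runs in the opposite direction. For an arbitrary flow (no genericity needed), the change-of-variables identity $\Leb(X_t(A))=\int_A|\det DX_t|\,d\Leb$ yields that Lebesgue-a.e.\ $x$ satisfies $\liminf_{t\to+\infty}|\det DX_t(x)|\le 1$; consequently some ergodic measure $\mu$, obtained as a limit of empirical measures along the positive orbit of $x$, is supported in $\omega(x)$ and satisfies $\int \di X\,d\mu\le 0$. The promotion from $\cl(\Crit(X))$ to $\dis(X)=\cl(\Crit_d(X))$ is then achieved by the flow version of Ma\~n\'e's ergodic closing lemma, combined with Franks-type perturbations and the usual generic semicontinuity arguments, which produce critical orbits of $X$ itself passing arbitrarily close to the support of $\mu\subset\omega(x)$ and whose eigenvalue product (or divergence) is close to the non-positive average above --- hence dissipative, so that $x\in W^s_w(\dis(X))$. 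This ergodic-closing ingredient is precisely what your sketch omits: Pugh's closing lemma and Hayashi's connecting lemma, which you invoke, only give critical points near $\omega(x)$ with no control whatsoever on their Jacobians, so they cannot perform the promotion. (A minor further point: your step (4) is misdirected, since generically $\Omega(X)=\cl(\Crit(X))$ by the General Density Theorem, so $\omega(x)\subset\cl(\Crit(X))$ for every $x$; the difficulty is never whether $\omega(x)$ meets $\Crit(X)$, but whether it meets $\cl(\Crit_d(X))$.)
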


The {\em homoclinic class} associated to $x\in\psad(X)$ is the closure of the set of transverse homoclinic points $q$ associated to $x$.
A homoclinic class of $X$ is the homoclinic class associated to some saddle of $X$.

Given a homoclinic class $H=H_X(p)$ of a three-dimensional flow $X$ we denote by
$H_Y=H_Y(p_Y)$ the continuation of $H$, where $p_Y$ is the analytic continuation of $p$
for $Y$ close to $X$ (c.f. \cite{pt}).

The following lemma was also proved in \cite{ams}. In its statement $\m$ denotes the normalized Lebesgue measure of $M$.

\begin{lemma}
\label{local}
There is a residual subset $\mathcal{R}_{12}$ of three-dimensional flows $X$
such that for every hyperbolic homoclinic class $H$
there are an open neighborhood $\mathcal{O}_{X,H}$ of $f$ and a residual subset
$\mathcal{R}_{X,H}$ of $\mathcal{O}_{X,H}$ such that the following properties are equivalent:
\begin{enumerate}
\item
$\m(W^s_Y(H_Y))=0$ for every $Y\in\mathcal{R}_{X,H}$.
\item
$H$ is not an attractor.
\end{enumerate}
\end{lemma}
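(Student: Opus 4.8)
I would argue by cases according to whether or not $H$ is an attractor, using in both the robustness of the hyperbolic continuation $Y\mapsto H_Y$; all the substance is in the non-attractor case.

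Suppose first that $H$ \emph{is} an attractor. Since $H$ is hyperbolic it has an isolating neighborhood $U$ which is forward trapping, $Y_t(\cl(U))\subset U$ for $t>0$, and $\bigcap_{t\ge 0}Y_t(U)=H_Y$; both properties persist on a small neighborhood $\mathcal{O}_{X,H}$ of $X$. Then for every $Y\in\mathcal{O}_{X,H}$ and every $x\in U$ one has $\omega_Y(x)\subset\bigcap_{t\ge0}Y_t(U)=H_Y$, so $U\subset W^s_Y(H_Y)$ and $\m(W^s_Y(H_Y))\ge\m(U)>0$. Hence (1) fails for any nonempty choice of $\mathcal{R}_{X,H}$; since (2) also fails, the equivalence is trivially true, and one takes $\mathcal{R}_{X,H}$ to be any residual subset of $\mathcal{O}_{X,H}$.

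Now suppose $H$ is \emph{not} an attractor. As a hyperbolic homoclinic class of a three-dimensional flow, $H$ carries a splitting $E^s\oplus E^X\oplus E^u$ with $\dim E^s=\dim E^u=1$, and there is $q\in W^u_{loc}(x_0)\setminus H$ for some $x_0\in H$; this witness persists, so after shrinking $\mathcal{O}_{X,H}$ we may assume that for every $Y\in\mathcal{O}_{X,H}$ the set $H_Y$ is a locally maximal hyperbolic set, not an attractor, with local invariant laminations varying continuously with $Y$. Covering $H$ by finitely many flow boxes, the trace $W^s_{loc}(H_Y)\cap\Sigma$ on a transverse section $\Sigma$ is a lamination by stable plaques (curves) over the closed, empty-interior set $K_Y:=H_Y\cap W^u_{loc}\cap\Sigma$ (empty interior because $H_Y$, being transitive and not an attractor, cannot contain a whole unstable plaque), while $W^s_Y(H_Y)=\bigcup_{t\ge0}Y_{-t}(W^s_{loc}(H_Y))$ up to saturation by the flow. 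Two applications of Fubini in flow-box coordinates give $\m(W^s_{loc}(H_Y))\le C\,\Leb(K_Y)$ with $C$ depending only on the boxes, so $\m(W^s_Y(H_Y))=0$ whenever the one-dimensional Lebesgue measure $\Leb(K_Y)$ vanishes. It therefore suffices to produce a residual $\mathcal{R}_{X,H}\subset\mathcal{O}_{X,H}$ along which $\Leb(K_Y)=0$, which I would get by a Baire argument: the sets $\mathcal{G}_n=\{Y\in\mathcal{O}_{X,H}:\Leb(K_Y)<1/n\}$ are open by upper semicontinuity of $Y\mapsto\Leb(K_Y)$ (from Hausdorff-continuity of $Y\mapsto K_Y$), so only their density remains.

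\textbf{Main obstacle.} The density of $\mathcal{G}_n$ is the crux. In the $C^{1+\alpha}$ world it is automatic: bounded distortion forces $\Leb(K_Y)=0$ for every non-attractor hyperbolic basic set. In the $C^1$ world this fails --- Bowen-type fat hyperbolic sets exist --- so one must exploit that such positive-measure examples are non-robust: the finely balanced expansion/contraction rates that sustain them are destroyed by arbitrarily small $C^1$ perturbations, so that $\Leb(K_Y)$ can be pushed below any prescribed $1/n$ by a $C^1$-small perturbation of $Y$ which, by structural stability, keeps $H_Y$ hyperbolic, conjugate to itself, and still not an attractor. This is exactly the kind of $C^1$-generic estimate established in \cite{A} and employed in \cite{ams}, and transporting it to the present setting is the only real work. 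Finally, a routine localization --- using second countability of $\diff$ and the robustness of hyperbolic homoclinic classes, so that each such class of each $X$ is the continuation of one carried by a member of a fixed countable dense family of flows together with one of its periodic orbits --- assembles the single residual set $\mathcal{R}_{12}$.
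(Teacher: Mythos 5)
Your skeleton (attractor case trivial and robust; non-attractor case reduced to showing that, residually in a neighborhood, the continuation has Lebesgue-null basin via a Fubini/lamination reduction plus upper semicontinuity of $Y\mapsto\Leb(K_Y)$) is reasonable, but the proof has a genuine gap exactly where you yourself locate it: the density of the sets $\mathcal{G}_n=\{Y:\Leb(K_Y)<1/n\}$ is asserted, not proved. You describe a mechanism (``destroy the finely balanced rates of a fat $C^1$ horseshoe by a small $C^1$ perturbation'') but give no construction, and you then defer the whole step to \cite{A} and \cite{ams} as ``the only real work.'' Since this is precisely the content of the lemma (everything else in your argument is soft), the proposal is incomplete at its central point. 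Note also that the difficulty is usually resolved not by a delicate $C^1$ perturbation of the fat set but by smoothing: $C^2$ flows are $C^1$-dense, for a $C^2$ flow a hyperbolic basic set which is not an attractor has stable set of zero Lebesgue measure (Bowen--Ruelle), and structural stability keeps the continuation a non-attractor basic set; combined with your semicontinuity observation this gives the required open-and-dense sets. None of this appears in your write-up, so as it stands the implication (2)$\Rightarrow$(1) is unproved. For comparison, the present paper does not reprove the lemma at all: it invokes \cite{ams}, so there is no internal argument to match yours against.

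Two secondary points. First, you ``shrink $\mathcal{O}_{X,H}$'' to assume $H_Y$ is locally maximal with a well-behaved continuation; local maximality of a hyperbolic homoclinic class is not automatic (there are non-isolated hyperbolic homoclinic classes), and securing it is one of the places where the genericity of $X$ (the set $\mathcal{R}_{12}$) must actually be used, not just quoted. Second, the final ``routine localization'' producing a single residual $\mathcal{R}_{12}$ out of the pairs $(\mathcal{O}_{X,H},\mathcal{R}_{X,H})$ is not routine: one must run a countable-basis argument with pairwise disjoint open sets of flows, in the style of the gluing carried out in the proof of Theorem \ref{fui} (the naive union $\bigcup_X\mathcal{R}_X$ of residual subsets of overlapping open sets need not be residual in $\bigcup_X\mathcal{O}_X$). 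Both points are fixable, but they should be written out; the missing density argument is the essential defect.
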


We say that a compact invariant set $\Lambda$ of a three-dimensionmal flow $X$ {\em has a spectral decomposition}
if there is a disjoint decomposition
$$
\Lambda=\displaystyle\bigcup_{i=1}^rH_i
$$
into finitely many disjoint homoclinic classes $H_i$, $1\leq i\leq r$,
each one being either hyperbolic (if $H_i\cap\Sing(X)=\emptyset$)
or a singular-hyperbolic attractor for either $X$ or $-X$ (otherwise).

Now we prove the following result which is similar to one in \cite{ams}
(we include its proof for the sake of completeness). In its statement
$\psad_d(X)$ denotes the set of periodic dissipative saddles of a three-dimensional flow $X$.

\begin{theorem}
\label{fui}
There is a residual subset $\mathcal{R}_{11}$ of three-dimensional flows $Y$ 
such that if $\cl(\psad_d(Y))$ has a spectral decomposition, then the following properties are equivalent for
every homoclinic $H$ associated to a dissipative periodic saddle:
\begin{enumerate}
\item[(a)] $\m(W^s_Y(H))>0$.
\item[(b)] $H$ is either hyperbolic attractor or a singular-hyperbolic attractor for $Y$.
\end{enumerate}
\end{theorem}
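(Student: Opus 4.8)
The plan is to derive the equivalence from Lemma~\ref{local} together with the spectral decomposition hypothesis, treating the hyperbolic and singular cases separately. The implication (b)$\Rightarrow$(a) is immediate: if $H$ is a hyperbolic attractor or a singular-hyperbolic attractor for $Y$, then $H$ has an open basin $W^s_Y(H)$ (this is a standard consequence of being an attractor, using the trapping neighborhood $U$ with $H=\bigcap_{t\geq 0}Y_t(U)$), hence $\m(W^s_Y(H))>0$. So the whole content is in (a)$\Rightarrow$(b). First I would fix the residual set: intersect $\mathcal{R}_{12}$ from Lemma~\ref{local} with the residual set of Kupka–Smale flows and with a residual set on which the continuation of a homoclinic class varies lower-semicontinuously and homoclinic classes associated to dissipative saddles are either hyperbolic or singular-hyperbolic (the latter is available in dimension three by the work underlying \cite{mpp}); call the result $\mathcal{R}_{11}$.

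Now suppose $Y\in\mathcal{R}_{11}$, that $\cl(\psad_d(Y))$ admits a spectral decomposition $\cl(\psad_d(Y))=\bigcup_{i=1}^r H_i$, and that $H$ is a homoclinic class of a dissipative periodic saddle $p$ with $\m(W^s_Y(H))>0$. Since $p\in\psad_d(Y)$ and the homoclinic class of $p$ is contained in $\cl(\psad_d(Y))$, the spectral decomposition forces $H$ to coincide with one of the pieces $H_i$ (homoclinic classes are transitive, and two distinct pieces are disjoint, so $H$ cannot straddle two of them; and it must be contained in at least one). Hence $H=H_i$ is either hyperbolic (if $H\cap\Sing(Y)=\emptyset$) or a singular-hyperbolic attractor for $Y$ or for $-Y$. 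In the latter subcase, if $H$ were a singular-hyperbolic attractor for $-Y$ it would be a repeller for $Y$, so its basin $W^s_Y(H)$ would be contained in the union of stable manifolds of the singularities and periodic orbits inside $H$ together with $H$ itself, a Lebesgue-null set (each strong-stable manifold of a hyperbolic critical point has zero volume, and there are countably many relevant orbits); this contradicts $\m(W^s_Y(H))>0$. Therefore $H$ is a singular-hyperbolic attractor for $Y$, and we are done in the singular case.

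It remains to handle the case $H\cap\Sing(Y)=\emptyset$, i.e. $H$ hyperbolic, where I must upgrade ``hyperbolic homoclinic class with basin of positive measure'' to ``hyperbolic attractor.'' This is exactly where Lemma~\ref{local} enters. Write $H=H_Y(p)$; by Lemma~\ref{local} there is an open neighborhood $\mathcal{O}_{Y,H}$ of $Y$ and a residual $\mathcal{R}_{Y,H}\subset\mathcal{O}_{Y,H}$ such that $\m(W^s_Z(H_Z))=0$ for all $Z\in\mathcal{R}_{Y,H}$ if and only if $H$ is not an attractor. Arguing by contradiction, suppose $H$ is not an attractor; then $\m(W^s_Z(H_Z))=0$ for every $Z\in\mathcal{R}_{Y,H}$. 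The point is that $\m(W^s_Z(H_Z))>0$ is, when combined with continuation and genericity, an open-type condition around $Y$: more precisely one shows that the function $Z\mapsto \m(W^s_Z(H_Z))$ cannot drop from positive at $Y$ to identically zero on a residual subset of every neighborhood of $Y$ — here one uses that the sink-free behavior forced by the hypothesis propagates, or more directly that if $\m(W^s_Y(H))>0$ then generically nearby the measure of the continuation's basin stays positive, contradicting that it is zero on a residual set. Since $Y$ can be taken in $\mathcal{R}_{11}$, and $\mathcal{R}_{11}$ was chosen to meet $\mathcal{R}_{Y,H}$ densely, we get the contradiction and conclude $H$ is a hyperbolic attractor. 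The main obstacle is precisely this last step: establishing that positivity of $\m(W^s_Y(H))$ is incompatible with $H$ failing to be an attractor, which requires carefully relating the (non-generic) flow $Y$ to the generic flows in $\mathcal{R}_{Y,H}$ through the dynamics of the continuation $H_Z$ and controlling how the weak basin and the basin interact — this is the technical heart inherited from the corresponding argument in \cite{ams}.
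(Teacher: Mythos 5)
Your reduction to the spectral decomposition and the case split (no singularity / attractor for $Y$ / attractor for $-Y$) matches the paper, but the two hard cases are not closed. First, the hyperbolic case: you apply Lemma~\ref{local} centered at $Y$ itself and then must rule out ``$H$ is not an attractor'' by arguing that positivity of $\m(W^s_Y(H))$ propagates to the flows $Z$ in the residual set $\mathcal{R}_{Y,H}$. But $Y$ need not belong to $\mathcal{R}_{Y,H}$, and the map $Z\mapsto \m(W^s_Z(H_Z))$ has no semicontinuity you can invoke; the ``propagation of positive basin measure'' you appeal to is exactly the unproved step, and it is not how the paper argues. The paper instead builds $\mathcal{R}_{11}$ \emph{out of} the local residual sets: it fixes the residual set $\mathcal{R}=\mathcal{R}_{12}\cap\mathcal{N}\cap\mathcal{L}\cap\mathcal{R}_7$ (upper semicontinuity of $X\mapsto\cl(\psad_d(X))$, Lemma~\ref{AO}, and persistence of singular-hyperbolic attractors from the flow version of \cite{a1}), takes a countable dense family $X^i$ of flows in $\mathcal{R}$ having a spectral decomposition, shrinks their neighborhoods $\mathcal{O}_{X^i}$ to be pairwise disjoint, and sets $\mathcal{R}_{11}=\mathcal{A}\cup\bigcup_i\mathcal{R}_{X^i}$. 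Then a generic $Y$ with a spectral decomposition lies in some $\mathcal{R}_{X}$, its classes are identified with continuations via (\ref{tolo}), and a \emph{single} $Y\in\mathcal{R}_{X,H^i}$ with $\m(W^s_Y(H^i_Y))>0$ already negates item (1) of Lemma~\ref{local}, so $H^i$ is an attractor for $X$; the equivalence $H^i_Y\simeq H^i$ in (\ref{toloo}) transfers this to $H=H^i_Y$. (Note also that a naive union $\bigcup_{X}\mathcal{R}_X$ over all base flows would not be residual; the countable, pairwise disjoint structure is needed.) Without some such mechanism your use of Lemma~\ref{local} does not yield the contradiction.

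Second, the repeller case: you dismiss $H$ being a singular-hyperbolic attractor for $-Y$ by saying $W^s_Y(H)$ is contained in countably many stable manifolds ``together with $H$ itself, a Lebesgue-null set.'' The correct and standard inclusion is simply $W^s_Y(H)\subset H$, but the nullity of $H$ itself is precisely the issue: a singular-hyperbolic repeller (e.g.\ a Lorenz repeller) of a $C^1$ flow is not automatically of zero Lebesgue measure, and your parenthetical (zero volume of stable manifolds of countably many orbits) does not address it. This is why the paper includes Lemma~\ref{AO} (generic zero volume of singular-hyperbolic attractors of $X$ or $-X$, after \cite{ao}) and the persistence statement (\ref{considera}) in the construction of the residual set, so that for the generic $Y$ one knows $\m(R^k_Y)=0$ and hence $\m(W^s_Y(R^k_Y))=0$, contradicting (a). Finally, the auxiliary residual set you posit (``homoclinic classes of dissipative saddles are generically hyperbolic or singular-hyperbolic'') is neither available as a quotable result nor needed, since the spectral decomposition is a hypothesis; what is needed, and missing, are the two ingredients above.
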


\begin{proof}
Let  $\mathcal{R}_{12}$ be as in Lemma \ref{local}.
Define the map $S:\diff\to 2^M_c$ by
$S(X)=\cl(\psad_d(X))$.
This map is clearly lower-semicontinuous, and so, upper semicontinuous in a residual subset
$\mathcal{N}$ (for the corresponding definitions see \cite{k1}, \cite{k}).

By Lemma \ref{AO} there is a residual subset $\mathcal{L}$ of three-dimensional flows
$X$ for which every singular-hyperbolic attractor with singularities of either $X$ or $-X$ has zero Lebesgue measure.

By the flow-version of the main result in \cite{a1}, there is a residual subset $\mathcal{R}_7$ of three-dimensional flows
$X$ such that for every singular-hyperbolic attractor $C$ for $X$ (resp. $-X$) there are neighborhoods
$U_{X,C}$ of $C$, $\mathcal{U}_{X,C}$ of $X$ and a residual subset $\mathcal{R}^0_{X,C}$ of $\mathcal{U}_{X,C}$ such that
for all $Y\in\mathcal{R}^0_{X,C}$ if $Z=Y$ (resp. $Z=-Y$) then

\begin{equation}
\label{considera}
C_Y=\displaystyle\bigcap_{t\geq0}Z_t(U_{X,C})
\mbox{ is a singular-hyperbolic attractor for }Z.
\end{equation}

Define $\mathcal{R}= \mathcal{R}_{12}\cap \mathcal{N}\cap\mathcal{L}\cap\mathcal{R}_7$.
Clearly $\mathcal{R}$ is a residual subset of three-dimensional flows.
Define
$$
\mathcal{A}=\{f\in \mathcal{R}:\cl(\psad_d(X))\mbox{ has no spectral decomposition}\}.
$$

Fix $X\in\mathcal{R}\setminus \mathcal{A}$.
Then, $X\in\mathcal{R}$ and
$\cl(\psad_d(X))$ has a spectral decomposition
$$
\cl(\psad_d(X))=\left(\displaystyle\bigcup_{i=1}^{r_X}H^i\right)\cup\left(\displaystyle\bigcup_{j=1}^{a_X} A^j\right)
\cup\left(\displaystyle\bigcup_{k=1}^{b_X} R^k\right)
$$
into hyperbolic homoclinic classes $H_i$
($1\leq i\leq r_X$), singular-hyperbolic attractors $A^j$ for $X$ ($1\leq j\leq a_X$), and
singular-hyperbolic attractors $R^k$ for $-X$ ($1\leq k\leq b_X$).

As $X\in \mathcal{R}_{12}\cap \mathcal{R}_7$, we can consider for each $1\leq i\leq r_X$, $1\leq j\leq a_X$ and $1\leq k\leq b_X$ the neighborhoods
$\mathcal{O}_{X, H^i}$, $\mathcal{U}_{X,A^j}$ and $\mathcal{U}_{X,R^k}$ of $X$ as well as their residual subsets $\mathcal{R}_{X,H^i}$,
$\mathcal{R}^0_{X,A^j}$ and $\mathcal{R}^0_{X,R^k}$ given by Lemma \ref{local} and (\ref{considera}) respectively.

Define
$$
\mathcal{O}_X=\left(\displaystyle\bigcap_{i=1}^{r_X}\mathcal{O}_{X,H^i}\right)\cap
\left(\displaystyle\bigcap_{j=1}^{a_X}\mathcal{U}_{X,A^j}\right)\cap \left(\displaystyle\bigcap_{k=1}^{b_X}\mathcal{U}_{X,R^k}\right)
$$
and
$$
\mathcal{R}_X=\left(\displaystyle\bigcap_{i=1}^{r_X}\mathcal{R}_{X,H^i}\right)\cap
\left(\displaystyle\bigcap_{j=1}^{a_X}\mathcal{R}_{X,A^j}^0\right)\cap \left(\displaystyle\bigcap_{k=1}^{b_X}\mathcal{R}_{X,R^k}^0\right).
$$
Clearly $\mathcal{R}_X$ is residual in $\mathcal{O}_X$.

From the proof of Lemma \ref{local} in \cite{ams} we obtain for each $1\leq i\leq r_X$
a compact neighborhood $U_{X,i}$ of $H^i$ such that
\begin{equation}
\label{toloo}
H^i_Y=\displaystyle\bigcap_{t\in\mathcal{R}}Y_t(U_{X,i})\quad\mbox{ is hyperbolic and equivalent to }H^i,
\quad\quad\forall Y\in \mathcal{O}_{Y,H^i}.
\end{equation}
As $X\in\mathcal{N}$, $S$ is upper semicontinuous at $X$ so we can further assume that
$$
\cl(\psad_d(Y))\subset \left(\displaystyle\bigcup_{i=1}^{r_X} U_{X,i}\right)\cup
\left(\displaystyle\bigcup_{j=1}^{a_X} U_{X,A^j}\right)\cup \left(\displaystyle\bigcup_{k=1}^{b_X} U_{X,R^k}\right),
\quad\quad\forall Y\in\mathcal{O}_{X}.
$$
It follows that
\begin{equation}
\label{tolo}
\cl(\psad_d(Y))=\left(\displaystyle\bigcup_{i=1}^{r_X}H^i_Y\right)\cup
\left(\displaystyle\bigcup_{j=1}^{a_X}A^j_Y\right)\cup \left(\displaystyle\bigcup_{k=1}^{b_X}R^k_Y\right),
\quad\quad\forall Y\in\mathcal{R}_X.
\end{equation}

Next we
take a sequence $X^i\in\mathcal{R}\setminus \mathcal{A}$ which is dense
in $\mathcal{R}\setminus \mathcal{A}$.

Replacing $\mathcal{O}_{X^i}$ by
$\mathcal{O}'_{X^i}$ where
$$
\mathcal{O}'_{X^0}=\mathcal{O}_{X^0}
\mbox{ and }
\mathcal{O}'_{X^i}=\mathcal{O}_{X^i}\setminus\left(\displaystyle\bigcup_{j=0}^{i-1}\mathcal{O}_{X^j}\right), \mbox{ for } i\geq1,
$$
we can assume that the collection
$\{\mathcal{O}_{X^i}:i\in\mathbb{N}\}$ is pairwise disjoint.

Define
$$
\mathcal{O}_{12}=\displaystyle\bigcup_{i\in\mathbb{N}}\mathcal{O}_{X^i}
\quad\mbox{ and }
\quad
\mathcal{R}'_{12}=\displaystyle\bigcup_{i\in\mathbb{N}}\mathcal{R}_{X^i}.
$$

We claim that $\mathcal{R}'_{12}$ is residual in $\mathcal{O}_{12}$.

Indeed, for all $i\in\mathbb{N}$ write $\mathcal{R}_{X^i}=\displaystyle\bigcap_{n\in \mathbb{N}} \mathcal{O}^n_i$,
where $\mathcal{O}^n_i$ is open-dense in
$\mathcal{O}_{X^i}$ for every $n\in\mathbb{N}$.
Since
$\{\mathcal{O}_{X^i}:i\in\mathbb{N}\}$ is pairwise disjoint, we obtain
$$
\displaystyle\bigcap_{n\in\mathbb{N}}
\displaystyle\bigcup_{i\in\mathbb{N}}\mathcal{O}^n_{i}
\subset
\displaystyle\bigcup_{i\in\mathbb{N}}
\displaystyle\bigcap_{n\in\mathbb{N}}\mathcal{O}^n_{i}=\displaystyle\bigcup_{i\in\mathbb{N}}\mathcal{R}_{X^i}=\mathcal{R}'_{12}.
$$
As $\displaystyle\bigcup_{i\in\mathbb{N}}\mathcal{O}^n_{X^i}$
is open-dense in $\mathcal{O}_{12}$, $\forall n\in \mathbb{N}$,
we obtain the claim.

Finally we define
$$
\mathcal{R}_{11}=\mathcal{A}\cup \mathcal{R}'_{12}.
$$
Since $\mathcal{R}$ is a residual subset of three-dimensional flows, we conclude as in Proposition 2.6 of \cite{m}
that $\mathcal{R}_{11}$ is also a residual subset of three-dimensional flows.

Take $Y\in\mathcal{R}_{11}$ such that $\cl(\psad_d(Y))$ has a spectral decomposition
and let $H$ be a homoclinic class associated to a dissipative saddle of $Y$.
Then,
$H\subset \cl(\psad_d(Y))$ by Birkhoff-Smale's Theorem \cite{hk}.
Since $\cl(\psad_d(Y))$ has spectral decomposition, we have $Y\notin \mathcal{A}$ so $Y\in \mathcal{R}'_{12}$
thus $Y\in\mathcal{R}_X$ for some $X\in\mathcal{R}\setminus \mathcal{A}$.
As $Y\in\mathcal{R}_X$, (\ref{tolo}) implies
$H=H^i_Y$ for some $1\leq i\leq r_X$ or $H=A^j_Y$ for some $1\leq j\leq a_X$ or $H=R^k_Y$ for some $1\leq k\leq b_X$.

Now, suppose that $\m(W^{s}_Y(H))>0$. Since $Y\in\mathcal{R}_X$, we have $Y\in \mathcal{R}^0_{X,R^k}$ for all $1\leq k\leq b_X$.
As $X\in\mathcal{L}$, and $W^s_Y(R^k_Y)\subset R^k_Y$ for every $1\leq k\leq b_X$, we conclude by Lemma \ref{AO} that $H\neq R^k_Y$ for every $1\leq k\leq b_X$.

If $H=A^j_Y$ for some $1\leq j\leq a_X$ then
$H$ is an attractor and we are done. Otherwise,
$H=H^i_Y$ for some $1\leq i\leq r_X$. As $Y\in\mathcal{R}_X$, we have
$Y\in \mathcal{R}_{X,H^i}$ and, since $f\in \mathcal{R}_{12}$, we conclude from Lemma \ref{local}
that $H^i$ is an attractor. But by (\ref{toloo}) we have that $H^i_Y$ and $H^i$ are equivalent, so, $H^i_Y$ is an attractor too and we are done.
\end{proof}

We shall need the following lemma which was essentially proved in \cite{ao}.

\begin{lemma}
\label{AO}
There is a residual subset $\mathcal{L}$ of three-dimensional flows $X$ for which every singular-hyperbolic
attractor with singularities of either $X$ or $-X$ has zero Lebesgue measure.
\end{lemma}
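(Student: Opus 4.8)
The plan is to reduce the statement to a single‑attractor fact — \emph{every singular‑hyperbolic attractor of a three‑dimensional $C^1$ flow that contains a singularity and persists for nearby flows has zero Lebesgue measure} — which is essentially the content of \cite{ao}, and then to glue it along a countable dense family of flows, remembering that we must treat attractors of $X$ and of $-X$ simultaneously. Since the reversal $X\mapsto -X$ is a homeomorphism of $\diff$, once we have a residual set $\mathcal{L}_0\subset\diff$ on which every singular‑hyperbolic attractor of $X$ containing a singularity has zero Lebesgue measure, the set $\mathcal{L}:=\mathcal{L}_0\cap\{X:-X\in\mathcal{L}_0\}$ is residual and has the desired property. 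So the task is to produce $\mathcal{L}_0$.

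First I would recall the mechanism behind the single‑attractor statement, so as to see which genericity it uses. Let $\Lambda$ be a singular‑hyperbolic attractor of $X$ with a singularity $\sigma\in\Lambda$. By \cite{mpp} the dominated splitting $T_\Lambda M=E^s\oplus E^{cu}$ has $\dim E^s=1$ (uniformly contracted), $\dim E^{cu}=2$ (volume expanding), and $\sigma$ is Lorenz‑like, so that its local stable manifold $W^s_{\mathrm{loc}}(\sigma)$ is a $2$‑disk sub‑foliated by strong stable curves. Stable manifold theory (valid in $C^1$ for the strongest‑contracting direction) gives a continuous strong stable foliation $\mathcal{F}^{ss}$ with $C^1$ leaves on a neighborhood of $\Lambda$, along which the flow, hence any first‑return map $R$ to a cross‑section $\Sigma$ transverse to $X$ and meeting $\Lambda$, contracts uniformly. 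The trace $\Lambda\cap\Sigma$ is the maximal invariant set of $R$, is saturated by $\mathcal{F}^{ss}$‑plaques, and $\Sigma\cap W^s_{\mathrm{loc}}(\sigma)$ is a line of discontinuity for $R$. The analysis of \cite{ao} then shows that $\bigcap_{n\ge 0}R^n(\Sigma)$ is a lamination by $\mathcal{F}^{ss}$‑curves whose transversal has zero length, because the geometric growth of the number of return branches created by the discontinuity is eventually dominated by the transverse contraction — here one uses the domination of the linear Poincar\'e flow to control the induced one‑dimensional expansion along $\mathcal{F}^{ss}$‑sections. Saturating by the flow and adjoining the lower‑dimensional sets $\{\sigma\}$, the orbit of $\sigma$ and $W^s_{\mathrm{loc}}(\sigma)$, one gets $\m(\Lambda)=0$; in particular $\Lambda\neq M$.

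To globalize, I would follow the scheme already used in the proof of Theorem~\ref{fui} (and in \cite{ams}). Intersecting the residual set provided by \cite{ao} with the residual set $\mathcal{R}_7$ coming from the flow version of \cite{a1}, for each $X$ in it and each singular‑hyperbolic attractor $C$ of $X$ there are a neighborhood $\mathcal{U}_{X,C}$ of $X$ and a residual subset $\mathcal{R}^0_{X,C}\subset\mathcal{U}_{X,C}$ along which $C$ continues to a singular‑hyperbolic attractor, hence has zero Lebesgue measure by the previous paragraph. Picking a countable dense family $\{X^i\}$ in this residual set, replacing the open sets $\bigcap_C\mathcal{U}_{X^i,C}$ by a pairwise disjoint open family, and gluing the corresponding residual subsets exactly as in the construction of $\mathcal{R}_{11}$ in the proof of Theorem~\ref{fui}, one obtains the required residual set $\mathcal{L}_0$.

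The main obstacle is the measure estimate of \cite{ao}: in the merely $C^1$ category one cannot invoke $C^{1+\alpha}$ absolute continuity, so the competition between the combinatorial growth of return branches produced by the stable manifold of the Lorenz‑like singularity and the transverse contraction of $\mathcal{F}^{ss}$ must be controlled through the dominated splitting of the linear Poincar\'e flow. The remaining ingredients — persistence of singular‑hyperbolic attractors and the residual gluing — are routine, being variants of steps already carried out above for Theorem~\ref{fui}.
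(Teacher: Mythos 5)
There is a genuine gap, and it sits exactly at the point you flag as ``the main obstacle''. Your reduction rests on the pointwise claim that \emph{every} singular-hyperbolic attractor with a singularity of a $C^1$ flow (persistent under perturbation) has zero Lebesgue measure, and you attribute this to \cite{ao}. That is not what \cite{ao} provides, and it is not a known theorem: the cross-section argument you sketch (zero transversal length of the lamination $\bigcap_n R^n(\Sigma)$) needs bounded distortion, i.e. $C^{1+\alpha}$ regularity as in \cite{aapp}; in the bare $C^1$ category the ``competition between combinatorial growth and transverse contraction'' is precisely what cannot be controlled (compare Bowen-type $C^1$ hyperbolic sets of positive volume), and saying that the LPF-domination controls it is an assertion, not an argument. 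This is the very reason the lemma is a \emph{generic} statement. What \cite{ao} actually proves, and what the paper uses, is an open-and-dense statement: for an open set $U$, letting $\Lambda_Y(U)=\bigcap_{t\in\mathbb{R}}Y_t(U)$ and $\mathcal{U}(U)$ be the (open) set of flows for which $\Lambda_Y(U)$ is singular-hyperbolic with singularities, the set $\mathcal{U}(U)_n=\{Y\in\mathcal{U}(U):\m(\Lambda_Y(U))<1/n\}$ is open (upper semicontinuity of $Y\mapsto\Lambda_Y(U)$ plus outer regularity of Lebesgue measure) and dense (approximate by a $C^2$ flow and invoke the zero-volume theorem of \cite{aapp}). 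The lemma then follows by intersecting, over a countable basis $\{U_m\}$, its finite unions $\{O_m\}$ and all $n$, the open dense sets $\mathcal{U}(O_m)_n\cup\bigl(\mathfrak{X}^1(M)\setminus\cl(\mathcal{U}(O_m))\bigr)$, made symmetric under $X\mapsto -X$; any singular-hyperbolic attractor of such an $X$ equals $\Lambda_X(O_m)$ for some $m$, whence its measure is $<1/n$ for all $n$.

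A secondary problem is the globalization you propose via \cite{a1} and a dense sequence $\{X^i\}$: even granting the single-attractor fact, the residual set you build only controls the \emph{continuations} $C_Y$ of attractors $C$ of the chosen base flows $X^i$, whereas a flow $Y$ in your glued residual set may possess singular-hyperbolic attractors that are not continuations of any such $C$. The paper's construction avoids this coverage issue (and dispenses with \cite{a1} altogether) because it quantifies over all basic open sets $O_m$, so every attractor of the generic flow is automatically realized as a maximal invariant set $\Lambda_X(O_m)$ and is therefore caught by the intersection.
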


\begin{proof}
As in \cite{ao}, for any open set $U$ and any three-dimensional vector field $Y$, let $\Lambda_Y(U)=\bigcap_{t\in\mathbb{R}}Y_t(U)$
be the maximal invariant set of $Y$ in $U$.
Define $\mathcal{U}(U)$ as the set of flows $Y$ such that $\Lambda_Y(U)$ is
a singular-hyperbolic set with singularities of $Y$.
It follows that $\mathcal{U}(U)$ is open in $\diff$.

Now define $\mathcal{U}(U)_n$ as the set of $Y\in \mathcal{U}(U)$ such that
$\m(\Lambda_Y(U))<1/n$. It was proved in \cite{ao} that $\mathcal{U}(U)_n$ is open and dense in $\mathcal{U}(U)$.

Define $\SR(U)_n=\SU(U)_n\cup (\mathfrak{X}^1(M)\setminus\cl(\SU(U))$ which is open and dense set in $\mathfrak{X}^1(M)$.
Let $\{U_m\}$ be a countable basis of the topology, and $\{O_m\}$ be the set of finite unions of such $U_m$'s.
Define
$$
\mathcal{L}=\bigcap_m\bigcap_n\SR(O_m)_n.
$$
This is clearly a residual subset of three-dimensional flows. We can assume without loss of generality that
$\mathcal{L}$ is symmetric, i.e., $X\in\mathcal{L}$ if and only if $-X\in \mathcal{L}$.
Take $X\in \mathcal{L}$. Let $\Lambda$ be a singular-hyperbolic attractor for $X$.
Then, there exists $m$ such that $\Lambda=\Lambda_X(O_m)$.
Then $X\in \SU(O_m)$ and so $X\in \SU(O_m)_n$ for every $n$ thus $\m(\Lambda)=0$.
Analogously, since $\mathcal{L}$ is symmetric, we obtain that $\m(\Lambda)=0$ for every singular-hyperbolic attractor with singularities
of $-X$.
\end{proof}

In the sequel we obtain the following key result representing the new ingredient with respect to \cite{ams}.
Its proof will use the methods in \cite{mpp}.
In its statement $\card(\sink(X))$ denotes the cardinality of the set of {\em different} orbits of a three-dimensional flow
$X$ contained in $\sink(X)$.

\begin{theorem}
\label{peo}
There is a residual subset $\mathcal{Q}$ of three-dimensional flows $X$
such that if $\card(\sink(X))<\infty$, then $\cl(\psad_d(X))$ has a spectral decomposition.
\end{theorem}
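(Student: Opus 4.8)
The plan is to show that, under the $C^1$ generic hypotheses collected in $\mathcal{Q}$, finiteness of $\sink(X)$ forces a dominated splitting of $\cl(\psad_d(X))$ with respect to the linear Poincar\'e flow, and then to upgrade this domination to the hyperbolic/singular-hyperbolic structure by the methods of \cite{mpp}. First I would take $\mathcal{Q}$ to be the intersection of the Kupka--Smale residual set; the residual set coming from Pugh's closing lemma and Hayashi's connecting lemma on which $\cl(\psad_d(X))$ is the closure of the union of the homoclinic classes of its dissipative periodic saddles, each such class coinciding with the chain recurrence class of the corresponding orbit and distinct homoclinic classes being disjoint; the residual set on which the maps $X\mapsto\cl(\psad_d(X))$ and $X\mapsto\cl(\sink(X))$ are continuity points; and the residual set on which ``$\card(\sink(X))<\infty$'' forces $\sup\{\card(\sink(Y)):Y\in\mathcal{V}\}<\infty$ for some $C^1$ neighborhood $\mathcal{V}$ of $X$ (a standard Baire argument, cf. \cite{A}, \cite{ams}).

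Fix $X\in\mathcal{Q}$ with $\card(\sink(X))<\infty$ and let $N$ bound the number of sinks over a neighborhood $\mathcal{V}$ of $X$. I claim $\cl(\psad_d(X))$ has a dominated splitting $N^{s}\oplus N^{u}$ for the linear Poincar\'e flow. If not, for every $T>0$ there are dissipative periodic saddles whose orbits violate the $T$-domination inequality, and by Pliss' lemma together with the three-dimensional flow version of Franks'/Gourmelon's $C^1$ perturbation lemma (as exploited in \cite{mpp} and \cite{ams}) each such orbit can be turned into an attracting periodic orbit by an arbitrarily small $C^1$ perturbation localized in an arbitrarily small neighborhood of the orbit --- dissipativeness is decisive here, since the product of the (perturbed) eigenvalues stays below $1$, forcing a sink and not a source. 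Choosing $N+1$ such saddles with pairwise disjoint orbits and performing these perturbations simultaneously in pairwise disjoint neighborhoods produces a flow in $\mathcal{V}$ with at least $N+1$ sinks, contradicting the choice of $N$.

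Next, following \cite{mpp}, I would promote this splitting. Domination together with Kupka--Smale and the connecting lemma forces every singularity in $\cl(\psad_d(X))$ to be Lorenz-like for $X$ or for $-X$. A Pliss-type argument, again powered by dissipativeness and the bound $N$, shows $N^{s}$ is uniformly contracted by the linear Poincar\'e flow; the induced splitting $E^{s}\oplus\langle X\rangle\oplus E^{c}$ over the regular part then extends continuously across the Lorenz-like singularities --- using their local normal form --- to a dominated splitting $T_{\cl(\psad_d(X))}M=E^{s}\oplus E^{cu}$ with $E^{s}$ uniformly contracting, while density of periodic orbits in the homoclinic classes, dissipativeness, and the local behavior near the singularities give that $E^{cu}$ is volume expanding; thus $\cl(\psad_d(X))$ is partially hyperbolic with volume-expanding center. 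It remains to cut it into homoclinic classes: the strong stable foliation of $E^{s}$ furnishes, for each homoclinic class $H\subset\cl(\psad_d(X))$ of a dissipative saddle, a trapping structure making $H$ both open and closed in $\cl(\psad_d(X))$, so by compactness there are finitely many, pairwise disjoint, with union $\cl(\psad_d(X))$. If such an $H$ misses $\Sing(X)$, then the one-dimensional bundle $E^{cu}$ over the compact invariant set $H$ is both volume expanding and, away from singularities, uniformly expanding, so $H$ is a hyperbolic homoclinic class; if $H$ meets $\Sing(X)$, then by the Morales--Pacifico--Pujals theory the transitive singular-hyperbolic set $H$ contains the unstable manifold of its Lorenz-like singularity and is an attractor --- for $X$ or for $-X$ according to the type of that singularity. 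This is precisely a spectral decomposition of $\cl(\psad_d(X))$.

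The main obstacle is the third step: transferring the linear-Poincar\'e-flow domination over the regular part to a genuine partially hyperbolic splitting of $TM$ over all of $\cl(\psad_d(X))$, singularities included, with volume-expanding center. The difficulty is concentrated at the singularities, where the linear Poincar\'e flow is undefined; one must use the Lorenz-like local dynamics both to glue the splitting continuously across the equilibria and to propagate the volume-expansion estimate through them, and to exclude non-Lorenz-like singularities from the accumulation set of the dissipative periodic saddles --- this is the technical heart imported from \cite{mpp}.
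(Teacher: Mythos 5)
Your first step (finiteness plus local constancy of $\card(\sink)$ forces LPF-domination over the dissipative saddles, via Franks/Pliss-type perturbations that would otherwise create extra sinks) is consistent with what the paper imports from \cite{ams} and \cite{mpp}. The genuine gap is in your third step, and it is not merely technical: you claim that the LPF-domination upgrades to a \emph{single} partially hyperbolic splitting $T_{\cl(\psad_d(X))}M=E^{s}\oplus E^{cu}$ with contracting $E^{s}$ and volume-expanding $E^{cu}$ over \emph{all} of $\cl(\psad_d(X))$. This cannot work in general, because $\cl(\psad_d(X))$ may contain index-$1$ singularities that are Lorenz-like for $-X$; near those the correct structure is that of a singular-hyperbolic attractor for $-X$ (this is exactly why the statement of the spectral decomposition allows attractors for either $X$ or $-X$), and the angle/domination estimates one can actually prove there are the time-reversed ones (the paper's (\ref{residua'}) versus (\ref{residua})). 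There is no global choice of a one-dimensional stable bundle in $TM$ serving both types of singularities. The paper never attempts this: it extends $E^{s,X}\oplus E^{cu,X}$ only over $\cl(W^u(\sigma))$ for each index-$2$ singularity, and $E^{s,-X}\oplus E^{cu,-X}$ over $\cl(W^s(\sigma))$ for each index-$1$ singularity, and then concludes, using the generic Lyapunov stability of these sets with dense (un)stable branches (from \cite{cmp}, \cite{mp}) together with lemmas 3.2 and 3.4 of \cite{bgy} and Theorem D of \cite{mp}, that each such set is a singular-hyperbolic attractor for $X$, respectively $-X$. Note also that your appeal to ``Morales--Pacifico--Pujals theory'' to get the attractor property from transitivity plus singular-hyperbolicity is insufficient: singular-hyperbolic transitive sets need not be attractors, and the Lyapunov-stability ingredient, which you never secure, is what makes this work.

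The second gap is the decomposition/finiteness step. Your assertion that the strong stable foliation makes each homoclinic class ``open and closed'' in $\cl(\psad_d(X))$ is unsubstantiated (homoclinic classes inside a partially hyperbolic set are not open in it in general), and in any case it rests on the global splitting criticized above. The paper argues differently: it first arranges a residual set $\mathcal{Q}_1$ on which every compact invariant set without singularities carrying an LPF-dominated splitting is hyperbolic (Lemma 3.1 of \cite{bgy} plus a minimally-nonhyperbolic-set argument), and then shows the family $\{H(p):p\in\psad_d(X)\}$ is finite by contradiction: the closure of infinitely many distinct classes either misses $\Sing(X)$, hence is hyperbolic and cannot support infinitely many distinct classes, or contains a singularity $\sigma$, in which case infinitely many classes are absorbed into the attractor $\cl(W^u(\sigma))$ (or $\cl(W^s(\sigma))$ for $-X$) and must coincide with it. Together with $\cl(\psad_d(X))=\cl\bigl(\bigcup\{H(p):p\in\psad_d(X)\}\bigr)$ this gives the spectral decomposition. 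To repair your write-up you would need to abandon the global volume-expanding splitting, carry out the per-singularity analysis for $X$ and $-X$ separately, add the generic Lyapunov-stability input, and replace the open-closed argument by a finiteness argument of this kind.
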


\begin{proof}
First we state some useful notiations.

Given a three-dimensional flow $Y$ and a point $p$
we denote by $O_Y(p)=\{Y_t(p):t\in\mathbb{R}\}$ the $Y$-orbit of $p$.
If $p\in\psad_d(Y)$ we denote by
$E^{s,Y}_p$ and $E^{u,Y}_p$ the eigenspaces corresponding to the eigenvalues
of modulus less and bigger than $1$
respectively.

Denote by $\lambda(p,Y)$ and $\mu(p,Y)$ the eigenvalues of $p$ satisfying
$$
|\lambda(p,Y)|<1<|\mu(p,Y)|.
$$

Define the {\em index} of a singularity $\sigma$ as the number $Ind(\sigma)$ of
eigenvalues with negative real part.

We say that a singularity $\sigma$ of $Y$ is {\em Lorenz-like for $Y$}
if its eigenvalues $\lambda_1,\lambda_2,\lambda_3$ are real and satisfy
$\lambda_2<\lambda_3<0<-\lambda_3<\lambda_1$ (up to some order).
It follows in particular that $\sigma$ is {\em hyperbolic} (i.e. without eigenvalues of zero real part)
of index $2$.
Furthermore, the invariant manifold theory \cite{hps} implies
the existence of {\em stable} and {\em unstable} manifolds
$W^{s,Y}(\sigma)$, $W^{u,Y}(\sigma)$ tangent at $\sigma$ to the eigenvalues $\{\lambda_2,\lambda_3\}$ and $\lambda_1$ respectively.
There is an additional invariant manifold $W^{ss,Y}(\sigma)$, the {\em strong stable manifold}, contained in $W^{s,Y}(\sigma)$ and tangent at
$\sigma$ to the eigenspace corresponding to $\lambda_1$.
We shall denote by $E^{ss,Y}_\sigma$ and $E^{cu,Y}_\sigma$ the eigenspaces associated to the set of eigenvalues
$\lambda_2$ and $\{\lambda_3,\lambda_1\}$ respectively.

Let $S(M)$ be the set of three-dimensional flows $X$ with $\card(\sink(X))<\infty$ such that
$$
\card(\sink(Y))=\card(\sink(X)), \quad\mbox{ for every }Y\mbox{ close to }X.
$$

\vspace{5pt}

Every $X\in S(M)$ satisfies the following properties:

\begin{itemize}
\item
There is a LPF-dominated splitting over
$\psad^*_d(X)\setminus \Sing(X)$, where $\psad_d^*(X)$ denotes the set of points
$x$ for which there are sequences $Y_k\to X$ and $x_k\in \psad_d(X_k)$ such that
$x_k\to x$ (c.f. \cite{w0}).

\vspace{5pt}

\item
There are a neighborhood $\mathcal{U}_X$, $0<\lambda<1$ and $\alpha>0$ such that
if $(p,Y)\in \psad_d(Y)\times \mathcal{U}_X$, then
\begin{enumerate}
\item[(a)]
\begin{enumerate}
\item[1.]
$|\lambda(p,Y)|<\lambda^{t_{p,Y}}$,
\item[2.]
$|\mu(p,Y)|>\lambda^{-t_{p,Y}}$.
\end{enumerate}
\item[(b)]
$\ang(E^{s,Y}_p,E^{u,Y}_p)>\al$.
\end{enumerate}
\end{itemize}

Indeed, the first property follows from the proof of Proposition 5.3 in \cite{ams} and the second from the proof
of Theorem 3.6 in \cite{mpp} (see also the proof of lemmas 7.2 and 7.3 in \cite{ams}).

\vspace{5pt}

In addition to this we also have the existence of a residual subset of three-dimensional flows
$\mathcal{R}_7$ such that every $X\in S(M)\cap \mathcal{R}_7$ satisfies that:

\begin{itemize}
\item
Every $\sigma\in\Sing(X)\cap\cl(\psad_d(X))$ with $Ind(\sigma)=2$ is Lorenz-like for $X$ and satisfies
$\cl(\psad_d(X))\cap W^{ss,X}(\sigma)=\{\sigma\}$.

\vspace{5pt}

\item
Every $\sigma\in\Sing(X)\cap\cl(\psad_d(X))$ with $Ind(\sigma)=1$ is Lorenz-like for $-X$ and satisfies
$\cl(\psad_d(X))\cap W^{uu,X}(\sigma)=\{\sigma\}$, where
$W^{uu,X}(\sigma)=W^{ss,-X}(\sigma)$.
\end{itemize}

Indeed, as in the remark after Lemma 2.13 in \cite{bgy},
there is a residual subset $\mathcal{R}_7$ of three-dimensional flows $X$ such that
every $\sigma\in\Sing(X)$ accumulated by periodic orbits is Lorenz-like for either $X$ or $-X$ depending on whether $\sigma$ has three
real eigenvalues $\lambda_1,\lambda_2,\lambda_3$ satisfying either
$\lambda_2<\lambda_3<0<\lambda_1$ or
$\lambda_2<0<\lambda_3<\lambda_1$ (up to some order).

Now, take $X\in S(M)\cap \mathcal{R}_7$.
Since $X\in S(M)$, we have that $\psad^*_d(X)\setminus \Sing(X)$ has a LPF-dominated splitting and then $\cl(\psad_d(X))\setminus \Sing(X)$
also does because $\cl(\psad_d(X))\subset \psad_d^*(X)$. Therefore,
if $\sigma\in\Sing_2(X)\cap\cl(\psad_d(X))$, Proposition 2.4 in \cite{d} implies
that $\sigma$ has three different real eigenvalues $\lambda_1,\lambda_2,\lambda_3$
satisfing $\lambda_2<\lambda_3<0<\lambda_1$ (up to some order).
Since $X\in\mathcal{R}_7$, we conclude that $\sigma$ is Lorenz-like for $X$.
To prove $\cl(\psad_d(X))\cap W^{ss,X}(\sigma)=\{\sigma\}$ we assume by contradiction that this is not the case.
Then, there is $x\in (\cl(\psad_d(X))\cap W^{ss,X}(\sigma))\setminus \{\sigma\}$.
Choose sequences $x_n\in\cl(\psad_d(X))$ and $t_n\to\infty$ such that $x_n\to x$
and $X_{t_n}(x_n)\to y$ for some $y\in W^{u,X}(\sigma)\setminus\{\sigma\}$.
Let $N^{s,X}\oplus N^{u,X}$ denote the LPF-dominated splitting of $\cl(\psad_d(X))\setminus \Sing(X)$.
We have $N^{s,X}_x=N_x\cap W^{s,X}(\sigma)$ by Proposition 2.2 in \cite{d} and so
$N_{x_n}$ tends to be tangent to $W^{s,X}(\sigma)$ as $n\to\infty$.
On the other hand, Proposition 2.4 in \cite{d} says that $N^{s,X}_{y}$ is almost parallel to $E^{ss,X}_\sigma$.
Therefore, the directions $N^{s,X}_{X_{t_n}(x_n)}$ tends to have positive angle with $E^{ss,X}_\sigma$.
But using that $\lambda_2<\lambda_3$ we can see that $N^{s,X}_{x_n}=P_{-t_n}(X_{t_n}(x_n))N^{s,X}_{X_{t_n}(x_n)}$ tends to be
transversal to $W^{s,X}(\sigma)$ nearby $x$.
As this is a contradiction, we obtain the result.
The second property can be proved analogously.

\vspace{5pt}

On the other hand,
there is another residual subset $\mathcal{Q}_1$ of three-dimensional flows
for which every compact invariant set without singularities but with a LPF-dominated splitting is hyperbolic.

Indeed, by Lemma 3.1 in \cite{bgy} we have that there is a residual subset $\mathcal{Q}_1$ of three-dimensional flows
for which every transitive set without singularities but with a LPF-dominated splitting is hyperbolic.
Fix $X\in\mathcal{Q}_1$ and a compact invariant set $\Lambda$ without singularities but with a LPF-dominated
splitting $N_\Lambda^X=N^{s,X}_\Lambda\oplus N^{u,X}_\Lambda$.
Suppose by contradiction that $\Lambda$ is not hyperbolic.
Then, by Zorn's Lemma, there is a minimally nonhyperbolic set $\Lambda_0\subset \Lambda$ (c.f. p.983 in \cite{PS}).
Assume for a while that $\Lambda_0$ is not transitive.
Then, $\omega(x)$ and $\alpha(x)=\omega_{-X}(x)$ are proper subsets of $\Lambda_0$, $\forall x\in\Lambda_0$.
Therefore, both sets are hyperbolic and then we have
$$
\lim_{t\to\infty}\|P^X_t(x)/N^{s,X}_x\|=\lim_{t\to \infty}\|P^X_{-t}(x)/N^{u,X}_x\|=0,
\quad\quad\forall
x\in \Lambda_0,
$$
which easily implies that $\Lambda_0$ is hyperbolic.
Since this is a contradiction, we conclude that $\Lambda_0$ is transitive.
As $X\in\mathcal{Q}_1$ and $\Lambda_0$ has a LPF-dominated splitting (by restriction), we conclude that $\Lambda_0$ is
hyperbolic, a contradiction once more proving the result.

\vspace{5pt}

Next we recall that a compact invariant set $\Lambda$ of a flow $X$ is {\em Lyapunov stable for $X$}
if for every neighborhood $U$ of $\Lambda$ there is a neighborhood $V\subset U$ of $\Lambda$ such that
$X_t(V)\subset U$, for all $t\geq 0$.

It follows from \cite{cmp}, \cite{mp} that there is a residual subset $\mathcal{D}$ of three-dimensional flows $X$ such that
if $\sigma\in\Sing(X)\cap\cl(\psad_d(X))$ and $Ind(\sigma)=2$, then
$\cl(W^u(\sigma))$ is a Lyapunov stable set for $X$ with dense singular unstable branches
contained in $\cl(\psad_d(X))$.
Analogously, if
$Ind(\sigma)=1$, then
$\cl(W^s(\sigma))$ is a Lyapunov stable set for $-X$ with dense singular stable branches
contained in $\cl(\psad_d(X))$.

From these properties we derive easily that every $X\in S(M)\cap\mathcal{R}_7\cap \mathcal{D}$ and
every $\sigma\in\Sing(X)\cap\cl(\psad_d(X))$ satisfies one of the following alternatives:

\begin{enumerate}
\item[(c)]
If $Ind(\sigma)=2$, then
every $\sigma'\in\Sing(X)\cap\cl(W^u(\sigma))$ is Lorenz-like for $X$.
\item[(d)]
If $Ind(\sigma)=1$, then
every $\sigma'\in\Sing(X)\cap\cl(W^s(\sigma))$ is Lorenz-like for $-X$.
\end{enumerate}

\vspace{5pt}

Given a three-dimensional flow $Y$
we define
$$
E^{cu,Y}_p=E^{u,Y}_p\oplus E^Y_p, \quad\quad\forall p\in\psad_d(Y).
$$

We claim that
there is a residual subset of three-dimensional flows $\mathcal{R}_{15}$ such that for every $X\in S(M)\cap\mathcal{R}_{15}$
and every $\sigma\in \Sing(X)\cap\cl(\psad_d(X))$ there are neighborhoods $\mathcal{V}_X$ of $X$, $U_\sigma$ of $\sigma$ and
$\beta_\sigma>0$ such that if $Y\in \mathcal{V}_X$ and $x\in \psad_d(Y)$ satisfies $O_Y(x)\cap U_\sigma\neq\emptyset$, then
\begin{equation}
\label{residua}
\ang(E^{s,Y}_x,E^{cu,Y}_x)>\beta_\sigma,\quad\quad\mbox{ if }Ind(\sigma)=2
\end{equation}
and
\begin{equation}
\label{residua'}
\ang(E^{s,-Y}_x,E^{cu,-Y}_x)>\beta_\sigma,\quad\quad\mbox{ if }Ind(\sigma)=1.
\end{equation}

(This step corresponds to Theorem 3.7 in \cite{mpp}.)

\vspace{5pt}

Indeed, we just take
$\mathcal{R}_{15}=\mathcal{Q}_1\cap \mathcal{D}\cap \mathcal{R}_7\cap \mathcal{I}$ where $\mathcal{I}$ is the set of upper semicontinuity points of the
the map $\varphi:X\mapsto \cl(\psad_d(X))$.

To prove (\ref{residua}) it suffices to show the following assertions, correponding to propositions 4.1 and 4.2 of \cite{mpp} respectively,
for any $X\in S(M)\cap \mathcal{R}_{15}$ and $\sigma\in\Sing(X)\cap\cl(\psad_d(X))$ with $Ind(\sigma)=2$
($B_\delta(\cdot)$ denotes the $\delta$-ball operation):

\vspace{5pt}

\begin{enumerate}
\item[{\em A1.}]
Given $\epsilon>0$ there are a neighborhood $\mathcal{V}_{X,\sigma}$ of $X$ and $\delta>0$
such that for all $Y\in\mathcal{V}_{X,\sigma}$ if $p\in\psad_d(Y)\cap B_\delta(\sigma_Y)$ then
\begin{enumerate}
 \item
$\ang(E^{s,Y}_p,E^{ss,Y}_{\sigma_Y})<\epsilon$;
\item$\ang(E^{cu,Y}_p,E^{cu,Y}_{\sigma_Y})<\epsilon$.
\end{enumerate}

\vspace{5pt}

\item[{\em A2.}]
Given $\delta>0$ there are a neighborhoof $\mathcal{O}$ of $X$ and $C>0$ such that if $Y\in\mathcal{O}$ and
$p\in\psad_d(Y)$ with $dist(p,\Sing(X)\cap\cl(\psad_d(X)))>\delta$, then
$$
\ang(E^{s,Y}_p,E^{cu,Y}_p)>C.
$$
\end{enumerate}

To prove A1-(a) we proceed as in p. 417 of \cite{cmp}.
By contradiction suppose that it is not true.
Then, there are $\gamma>0$ and sequences $Y^n\to X$, $p_n\in \psad_d(Y^n)\to \sigma$
such that
$$
\ang(E^{s,Y^n}_{p_n},E^{ss,Y^n}_{\sigma_{Y^n}})>\gamma,
\quad\quad\forall n\in \mathbb{N}.
$$
As in \cite{mpp} we take small cross sections $\Sigma^s_{\delta,\delta'}$ and $\Sigma^u_\delta$ located close to the singularities
in $\cl(W^u(\sigma))$ all of which are Lorenz-like (by (c) above).
It turns out that since $p_n\to \sigma$, there are times $t_n\to\infty$ satisfying $q_n=Y^n_{t_n}(p_n)\in \Sigma^u_\delta$.
Using the above inequality we obtain
$$
\ang(E^{s,Y^n}_{q_n},E^{Y^n}_{q_n})\to 0.
$$
Next  consider the first $s_n>0$ such that
$$
\tilde{q}_n=Y^n_{s_n}(q_n)\in \Sigma^s_{\delta,\delta'}.
$$
We obtain
\begin{equation}
\label{oculo}
\ang(E^{s,Y^n}_{\tilde{q}_n},E^{Y^n}_{\tilde{q}_n})\to0.
\end{equation}
To see why, we assume two cases: either $s_n$ is bounded or not.
If it does, then the above limit follows from the corresponding one for $q_n$.
If not, we consider a limit point $q$ of the sequence $Y^n_{\frac{s_n}{2}}(q_n)$ with $s_n\to\infty$.
After observing that the $X$-orbit of $q$ cannot accumulate any index $1$ singularity we obtain
easily that $q\in \Gamma$, where
$$
\Gamma=\displaystyle\bigcap_{t\in\mathbb{R}}X_t\left(\cl(\psad_d(X))\setminus B_{\delta^*}(\Sing(X)\cap\cl(\psad_d(X)))\right),
$$
for some $\delta^*>0$ small.
Clearly $\Gamma$ is a compact invariant subset of $X$ contained in $\cl(\psad_d(X))\setminus \Sing(X)$.
Since $X\in S(M)$, we have that $\Gamma$ has a LPF-dominated splitting, and so, it is hyperbolic because $X\in\mathcal{Q}_1$.
This allows us to repeat the proof in p. 419 to obtain (\ref{oculo}) which, together with (b) above,
implies that $\ang(E^{u,Y^n}_{\tilde{q_n}},E^{Y^n}_{\tilde{q_n}})$ is bounded away from zero.
But now we consider the first positive time $r_n$ satisfying $\tilde{\tilde{q}}_n=Y^n_{r_n}(\tilde{q}_n)\in \Sigma^u_\delta$.
We get as in p. 419 in \cite{mpp} that $\ang(E^{s,Y^n}_{\tilde{\tilde{q}}_n}, E^{Y^n}_{\tilde{\tilde{q}}_n})\to 0$
and, since $\ang(E^{u,Y^n}_{\tilde{q_n}},E^{Y^n}_{\tilde{q_n}})$ is bounded away from $0$, we also obtain
$\ang(E^{u,Y^n}_{\tilde{\tilde{q}}_n}, E^{Y^n}_{\tilde{\tilde{q}}_n})\to 0$.
All this together yield
$\ang(E^{s,Y^n}_{\tilde{\tilde{q}}_n}, E^{u,Y^n}_{\tilde{\tilde{q}}_n})\to 0$ which contradicts (b).
This contradiction completes the proof of A1-(a).
The bound in A1-(b) follows easily from the methods in \cite{d}.
This completes the proof of A1.
A2 follows exactly as in p. 421 of \cite{mpp}.
Now A1 and A2 imply (\ref{residua}) as in \cite{mpp}.
To prove (\ref{residua'}) we only need to repeat the above proof
with $-Y$ instead of $Y$ taking into account the symmetric relations below:
$$
\lambda(p,-Y)=\mu^{-1}(p,Y),\,
\mu(p,-Y)=\lambda^{-1}(p,Y),\, E^{s,-Y}_p=E^{u,Y}_p\mbox{ and }E^{u,-Y}_p=E^{s,Y}_p.
$$

\vspace{5pt}

Once we prove 
(\ref{residua}) and (\ref{residua'}) we use them together with (a) and (b), as in the proof of Theorem F in \cite{cmp}, to obtain
that for every $X\in\mathcal{R}_{15}\cap S(M)$ there is a neighborhood $\mathcal{K}_X$, $0<\rho<1$, $c>0$, $\delta>0$ and $T_0>0$ satisfying
the following properties for every $Y\in\mathcal{K}_X$ and every $x\in \psad_d(Y)$ satisfying $t_{x,Y}>T_0$ and $O_Y(x)\cap B_\delta(\sigma)\neq\emptyset$:
\begin{itemize}
\item
If $Ind(\sigma)=2$, then
$$
\|DY_T(p)/E^{s,Y}_p\|\cdot \|DY_{-T}(p)/E^{cu,Y}_{Y_{-T}(p)}\|\leq c\rho^T, \quad\quad\forall T>0.
$$
\item
If $Ind(\sigma)=1$, then
$$
\|D(-Y)_T(p)/E^{s,-Y}_p\|\cdot \|D(-Y)_{-T}(p)/E^{cu,-Y}_{(-Y)_{-T}(p)}\|\leq c\rho^T, \quad\quad\forall T>0.
$$
\end{itemize}

Since we can assume that $X$ is Kupka-Smale (by the Kupka-Smale Theorem \cite{hk}), the set of periodic orbits with period $\leq T_0$
of $X$ in $\psad_d(X)$ is finite.
If one of these orbits (say $O$) do not belong to $\cl(\cl(\psad_d(X))\setminus \{x\in\psad_d(X):t_x<T_0\})$
then it must happen that $O$ is isolated in the sense
that $\cl(\psad_d(X))\setminus O$ is a closed subset.
Therefore, up to a finite number of isolated periodic orbits,
we can assume that the set
$\psad_d^{T_0}(X)=\{p\in\psad_d(X):t_{p,X}\geq T_0\}$ is dense in $\cl(\psad_d(X))$.
Then, as in p.400 of \cite{mpp} we obtain the following properties:

\begin{itemize}
\item
If  $Ind(\sigma)=2$, then
the splitting $E^{s,X}\oplus E^{cu,X}$
extends to a dominated splitting $E\oplus F$ for $X$ over $\cl(W^u(\sigma))$ with $dim(E)=1$ and $E^X\subset F$.
\item
If $Ind(\sigma)=1$ the splitting $E^{s,-X}\oplus E^{cu,-X}$ extends to a dominated splitting $E\oplus F$ for $-X$ over $\cl(W^s(\sigma))$
with $dim(E)=1$ and $E^{-X}\subset F$.
\end{itemize}

Therefore, we conclude from (c) and (d) above, lemmas 3.2 and 3.4 in \cite{bgy} and Theorem D in \cite{mp} that
if $X\in\mathcal{R}_{15}\cap S(M)$ and $\sigma\in\Sing(X)\cap \cl(\psad_d(X))$, then:

\begin{itemize}
\item
If $Ind(\sigma)=2$, then $\cl(W^u(\sigma))$ is a singular-hyperbolic attractor for $X$.
\item
If $Ind(\sigma)=1$, then
$\cl(W^s(\sigma))$ is a singular-hyperbolic attractor for $-X$.
\end{itemize}

Next, we define $\phi: \diff\to 2^M_c$ by $\phi(X)=\cl(\sink(X))$.
This map is clearly lower semicontinuous, and so, upper semicontinuous in a residual subset $\mathcal{C}$ of $\diff$ (\cite{k}, \cite{k1}).
If $X\in \mathcal{C}$ satisfies $\card(\sink(X))<\infty$, then the upper semicontinuity of $\phi$ at $X$ do imply
$X\in S(M)$.

Finally we define
$$
\mathcal{Q}=\mathcal{R}_{15}\cap \mathcal{C}.
$$
Clearly $\mathcal{Q}$ is a residual subset of three-dimensional flows.

Take $X\in\mathcal{Q}$ with $\card(\sink(X))<\infty$.
Since $X\in \mathcal{C}$, we obtain $X\in S(M)$ thus $X\in\mathcal{R}_{15}\cap S(M)$.
Then, if $\sigma\in\Sing(X)\cap \cl(\psad_d(X))$,
$\cl(W^u(\sigma))$ is singular-hyperbolic for $X$ (if $Ind(\sigma)=2$)
and that $\cl(W^s(\sigma))$ is a singular-hyperbolic attractor for $-X$ (if $Ind(\sigma)=1$).

Now we observe that if $p\in\psad_d(X)$ then $H(p)\subset \cl(\sad_d(X))$ by the Birkhoff-Smale Theorem. From this we obtain
\begin{equation}
\label{spectral}
\cl(\psad_d(X))=
\cl\left(\displaystyle\bigcup\{H(p):p\in\psad_d(X)\}\right).
\end{equation}

We claim that the family $\{H(p):p\in\psad_d(X)\}$ is finite.
Otherwise, there is an infinite sequence $p_k\in\psad_d(X)$ yielding
infinitely many distinct homoclinic classes $H(p_k)$.
Consider the closure $\cl(\bigcup_k H(p_k))$, which is a compact invariant set contained in $\cl(\psad_d(X))$.
If this closure does not contain any singularity, then it would be a hyperbolic set (this follows because $\mathcal{R}_{15}\subset \mathcal{Q}_1$).
Since there are infinitely many distinct homoclinic classses in this closure, we obtain a contradiction proving that
$\cl(\bigcup_k H(p_k))$ contains a singularity $\sigma\in\cl(\psad_d(X))$.
If $Ind(\sigma)=2$ then $\sigma$ lies in $\cl(W^u(\sigma))$ which is an attractor, and so,
we can assume that $H(p_k)\subset \cl(W^u(\sigma))$ for every $k$ thus
$H(p_k)=\cl(W^u(\sigma))$ for every $k$ which is absurd. Analogously for $Ind(\sigma)=1$
and the claim is proved.
Combining with (\ref{spectral}) we obtain the desired spectral decomposition.
\end{proof}

\vspace{5pt}

\begin{proof}[Proof of Theorem A]
Define $\mathcal{R}=\mathcal{R}_6\cap\mathcal{R}_{11}\cap\mathcal{Q}$, where $\mathcal{R}_6$, $\mathcal{R}_{11}$ and $\mathcal{Q}$
are the residual subsets given by
theorems \ref{move-attractor}, \ref{fui} and \ref{peo} respectively.
Suppose that $X\in\mathcal{R}$ has no infinitely many sinks. Then, $\card(\sink(X))<\infty$.
Since $X\in\mathcal{Q}$,
we conclude by Theorem \ref{peo} 
that $\cl(\psad_d(X))$ has a spectral decomposition. Since $X\in\mathcal{R}_{11}$, Theorem \ref{fui} implies that
every homoclinic $H$ associated to a dissipative periodic saddle of $X$ with
$\m(W^s_Y(H))>0$ is an attractor of $X$.
Since $X\in\mathcal{R}_6$, we have that
$\m(W^s_w(\dis(X)))=1$ by Theorem \ref{move-attractor}.

Now, we consider the following decomposition:
$$
\dis(X)
=\cl(\sad_d(X)\cap\Sing(X))\cup\cl(\psad_d(X))\cup\sink(X),
$$
valid in the Kupka-Smale case (which is generic). From this we obtain the union
$$
W^s_w(\dis(X))=
\left(\bigcup\{W^s(\sigma):\sigma\in\sad_d(X)\cap\Sing(X)\mbox{ and }W^s_w(\sigma)=W^s(\sigma)\}\right)
\cup
$$
$$
\left(\bigcup\{W^s_w(\sigma):\sigma\in\sad_d(X)\cap\Sing(X)\mbox{ and }W^s_w(\sigma)\neq W^s(\sigma)\}\right)\cup
$$
$$
W^s_w(\cl(\psad_d(X)))\cup W^s(\sink(X)).
$$
But it is easy to check that the first element in the right-hand union above has zero Lebesgue measure and, by the Hayashi's connecting lemma \cite{h},
we can assume without loss of generality that every $\sigma\in\sad_d(X)\cap\Sing(X)$ satisfying $W^s_w(\sigma)\neq W^s(\sigma)$ lies in
$\cl(\psad_d(X))$. Since $W^s_w(\dis(X))$ has full Lebesgue measure, we conclude that
$$
\m(W^s_w(\cl(\psad_d(X)))\cup W^s(\sink(X)))=1.
$$
Now, we use the spectral decomposition
$$
\cl(\psad_d(X))=\displaystyle\bigcup_{i=1}^rH_i
$$
into finitely many disjoint homoclinic classes $H_i$, $1\leq i\leq r$,
each one being either hyperbolic (if $H_i\cap\Sing(X)=\emptyset$)
or a singular-hyperbolic attractor for either $X$ or $-X$ (otherwise), yielding
$$
\m\left(\left(\displaystyle\bigcup_{i=1}^rW^s_w(H_i)\right)\cup W^s(\sink(X))\right)=1.
$$
But the results in Section 3 of \cite{cmp}
imply that each $H_i$ can be written as
$H_i=\Lambda^+\cap \Lambda^-$,
where $\Lambda^\pm$ is a Lyapunov stable set for $\pm X$.
We conclude from Lemma 2.2 in \cite{cmp} that $W^s_w(H_i)=W^s(H_i)$ thus
$$
\m\left(\left(\displaystyle\bigcup_{i=1}^rW^s(H_i)\right)\cup W^s(\sink(X))\right)=1.
$$
Let $1\leq i_1\leq \cdots \leq i_d\leq r$ be such that $\m(W^s(H_{i_k}))>0$ for every $1\leq k\leq d$.
As the basin of the remainder homoclinic classes in the collection
$H_1,\cdots,H_r$ are negligible, we can remove them from the above union yielding
$$
\m\left(
\left(\displaystyle\bigcup_{k=1}^dW^s(H_{i_k})\right)\cup\left(\displaystyle\bigcup_{j=1}^lW^s(s_j)\right)
\right)=1,
$$
where the $s_j$'s above correspond to the finitely many orbits of $X$ in $\sink(X)$.
Since $f\in \mathcal{R}_{11}$, we have from Theorem \ref{fui} that $H_{i_k}$ is an attractor
which is either hyperbolic or singular-hyperbolic for $X$, $\forall 1\leq k\leq d$.
From this we obtain the result.
\end{proof}

\vspace{5pt}

\begin{proof}[Proof of Theorem B]
Suppose by contradiction that there is
a $C^1$ generic three-dimensional flow of an orientable manifold
such that $\cl(\sink(X))\setminus \sink(X)$ has a LPF-dominated splitting.
Then, $\card(\sink(X))=\infty$ and
$X$ has finitely many periodic sinks with
nonreal eigenvalues. Since $X$ is $C^1$ generic, we obtain
that the number of orbits of sinks with nonreal eigenvalues is locally constant at $X$.
From this we can assume without loss of generality that
every sink of a nearby flow is periodic with real eigenvalues.
Furthermore, we obtain the following alternatives:
If $Ind(\sigma)=2$, then $\sigma$ is Lorenz-like for $X$ and satisfies
$$
(\cl(\sink(X))\setminus \sink(X))\cap W^{ss,X}(\sigma)=\{\sigma\},
$$
and, if $Ind(\sigma)=1$, then $\sigma$ is Lorenz-like for $-X$ and satisfies
$$
(\cl(\sink(X))\setminus \sink(X))\cap W^{uu,X}(\sigma)=\{\sigma\}.
$$
As before, these alternatives imply the following ones:
\begin{enumerate}
\item
If $Ind(\sigma)=2$, then every $\sigma'\in \Sing(X)\cap \cl(W^u(\sigma))$ is Lorenz-like for $X$.
\item
If $Ind(\sigma)=1$, then every $\sigma'\in \Sing(X)\cap \cl(W^s(\sigma))$ is Lorenz-like for $-X$.
\end{enumerate}

For any $p\in\per(X)$ we denote by $\lambda(p,X)$ and $\mu(p,X)$ the two eigenvalues of $p$ so that
$$
|\lambda(p,X)|\leq |\mu(p,X)|.
$$
The corresponding eigenspaces will be denoted by $E^{-,X}_p$ and $E^{+,X}_p$.
We have the symmetric relations
$$
\lambda(p,-X)=\mu^{-1}(p,X),
\mu(p,-X)=\lambda^{-1}(p,X), E^{-,-X}_p=E^{+,X}_p,E^{+,-X}_p=E^{-,X}_p.
$$

We obtain from the fact that the number of sinks with nonreal eigenvalues is locally
constant at $X$ that there is a fixed number $0<\lambda<1$ and a neighborhood $\mathcal{U}_X$ of $X$
satisfying:
\begin{enumerate}
\item[(a)]
$\frac{|\lambda(p,Y)|}{|\mu(p,Y)|}\leq \lambda^{t_{p,Y}}$ and \item[(b)]
$\ang(E^{-,X}_p,E^{+,Y}_p)>\alpha$, for every $(p,Y)\in\sink(Y)\times \mathcal{U}_X$.
\end{enumerate}
Using these properties we obtain as in the proof of Theorem \ref{peo} that
there are neighborhoods $\mathcal{V}_X$ of $X$, $U_\sigma$ of $\sigma$ and
$\beta_\sigma>0$ such that if $Y\in \mathcal{V}_X$ and $x\in \sink(Y)$ satisfies $O_Y(x)\cap U_\sigma\neq\emptyset$, then
$$
\ang(E^{-,Y}_x,E^{cu,Y}_x)>\beta_\sigma,\quad\quad\mbox{ if }Ind(\sigma)=2
$$
and
$$
\ang(E^{-,-Y}_x,E^{cu,-Y}_x)>\beta_\sigma,\quad\quad\mbox{ if }Ind(\sigma)=1.
$$
Consequently there are a neighborhood $\mathcal{K}_X$ of $X$, $0<\rho<1$, $c>0$, $\delta>0$ and $T_0>0$ satisfying
the following properties for every $Y\in\mathcal{K}_X$ and every $x\in \cl(\sink(Y))\setminus \sink(Y)$
satisfying $t_{x,Y}>T_0$ and $O_Y(x)\cap B_\delta(\sigma)\neq\emptyset$:
\begin{itemize}
\item
If $Ind(\sigma)=2$, then
$$
\|DY_T(p)/E^{-,Y}_p\|\cdot \|DY_{-T}(p)/E^{cu,Y}_{Y_{-T}(p)}\|\leq c\rho^T, \quad\quad\forall T>0.
$$
\item
If $Ind(\sigma)=1$, then
$$
\|D(-Y)_T(p)/E^{-,-Y}_p\|\cdot \|D(-Y)_{-T}(p)/E^{cu,-Y}_{(-Y)_{-T}(p)}\|\leq c\rho^T, \quad\quad\forall T>0.
$$
\end{itemize}

Using these dominations as before we
obtain the following:
\begin{itemize}
\item
If $Ind(\sigma)=2$, then $\cl(W^u(\sigma))$ is a singular-hyperbolic attractor for $X$.
\item
If $Ind(\sigma)=1$, then
$\cl(W^s(\sigma))$ is a singular-hyperbolic attractor for $-X$.
\end{itemize}
Since a singular-hyperbolic attractor for either $X$ or $-X$ cannot be accumulated by sinks
we conclude that
$$
\Sing(X)\cap(\cl(\sink(X))\setminus\sink(X))=\emptyset.
$$
Since there is a LPF-dominated splitting, we conclude
that $\cl(\sink(X))\setminus\sink(X)$ is a hyperbolic set.
Since there are only a finite number of orbits of sinks in a neighborhood of a hyperbolic set,
we conclude that
$\card(\sink(X))<\infty$ which is absurd.
This concludes the proof.
\end{proof}


\begin{thebibliography}{10}



\bibitem{a1}
Abdenur, F.,
Attractors of generic diffeomorphisms are persistent,
{\em Nonlinearity} 16 (2003), no. 1, 301--311.







\bibitem{aapp}
Alves, J.F., Ara\'{u}jo, V., Pacifico, M.J., Pinheiro, V.,
On the volume of singular-hyperbolic sets,
{\em Dyn. Syst.} 22 (2007), no. 3, 249–267. 




\bibitem{A}
Araujo, A.,
Exist\^encia de atratores hiperb\'olicos para difeomorfismos de superficies (Portuguese),
Preprint IMPA S\'erie F, No 23/88, 1988.


\bibitem{ams}
Arbieto, A., Morales, C.A., Santiago, B.,
On Araujo's Theorem for flows,
Preprint (2013)
arXiv:1307.5796v1 [math.DS] 22 Jul 2013.


\bibitem{ao}
Arbieto, A., Obata, D.J.,
On attractors and their basins,
Preprint 2012.




\bibitem{ah}
Arroyo, A, Rodriguez Hertz, F.,
Homoclinic bifurcations and uniform hyperbolicity for three-dimensional flows,
{\em Ann. Inst. H. Poincar\'e Anal. Non Lin\'eaire} 20 (2003), no. 5, 805--841. 



\bibitem{bs}
Bhatia, N.P., Szeg\"o, G.P.,
{\em Stability theory of dynamical systems},
Die Grundlehren der mathematischen Wissenschaften, Band 161 Springer-Verlag, New York-Berlin 1970.





\bibitem{bgy}
Bonatti, C., Gan, S., Yang, D.,
Dominated chain recurrent class with singularities,
{\em Ann. Sc. Norm. Super. Pisa Cl. Sci. (5)} (To appear).














\bibitem{cmp}
Carballo, C.M., Morales, C.A., Pacifico, M.J.,
Homoclinic classes for generic $C^1$ vector fields,
{\em Ergodic Theory Dynam. Systems} 23 (2003), no. 2, 403--415. 









\bibitem{d}
Doering, C.I.,
Persistently transitive vector fields on three-dimensional manifolds,
Dynamical systems and bifurcation theory (Rio de Janeiro, 1985), 59–89,
{\em Pitman Res. Notes Math. Ser.}, 160, Longman Sci. Tech., Harlow, 1987. 






\bibitem{h}
Hayashi, S.,
Connecting invariant manifolds and the solution of the C1 stability and Ω-stability conjectures for flows,
{\em Ann. of Math.} (2) 145 (1997), no. 1, 81--137. 







\bibitem{hk}
Hasselblatt, B., Katok, A.,
{\em Introduction to the modern theory of dynamical systems} (with a supplementary chapter by
Katok and Leonardo Mendoza),
Encyclopedia of Mathematics and its Applications, 54. Cambridge University Press, Cambridge, 1995.







\bibitem{hps}
Hirsch, M., Pugh, C., Shub, M.,
{\em Invariant manifolds},
Lec. Not. in Math. 583 (1977), Springer-Verlag.





\bibitem{k1}
Kuratowski, K.,
{\em Topology. Vol. II},
New edition, revised and augmented. Translated from the French by A. Kirkor Academic Press, New York-London;
Pa\'nstwowe Wydawnictwo Naukowe Polish Scientific Publishers, Warsaw 1968.


\bibitem{k}
Kuratowski, K.,
{\em Topology. Vol. I},
New edition, revised and augmented. Translated from the French by J. Jaworowski Academic Press, New York-London;
Pa\'nstwowe Wydawnictwo Naukowe, Warsaw 1966.











 





















\bibitem{m}
Morales, C.A.,
Another dichotomy for surface diffeomorphisms,
{\em Proc. Amer. Math. Soc.}  137  (2009),  no. 8, 2639--2644.





\bibitem{mp}
Morales, C.A., Pacifico, M.J.,
A dichotomy for three-dimensional vector fields,
{\em Ergodic Theory Dynam. Systems} 23 (2003), no. 5, 1575--1600. 









\bibitem{mpp}
Morales, C.A., Pacifico, M.J., Pujals, E.R.,
Robust transitive singular sets for 3-flows are partially hyperbolic attractors or repellers,
{\em Ann. of Math. (2)} 160 (2004), no. 2, 375--432. 





\bibitem{pt}
Palis, J., Takens, F.,
{\em Hyperbolicity and sensitive chaotic dynamics at homoclinic bifurcations.
Fractal dimensions and infinitely many attractors.} Cambridge Studies in Advanced Mathematics,
35. Cambridge University Press, Cambridge, 1993.

















\bibitem{PS}
Pujals, E.R., Sambarino, M.,
Homoclinic tangencies and hyperbolicity for surface diffeomorphisms,
{\em Ann. of Math.} (2) 151 (2000), 961--1023.















\bibitem{w0}
Wen, L.,
On the preperiodic set,
{\em Discrete Contin. Dynam. Systems} 6 (2000), no. 1, 237--241.






\end{thebibliography}
\end{document}